\newcommand{\be}{\begin{eqnarray}}
\newcommand{\ee}{\end{eqnarray}}
\newcommand{\ce}{\begin{eqnarray*}}
\newcommand{\de}{\end{eqnarray*}}
\newtheorem{thm}{Theorem}[section]
\newtheorem{lem}[thm]{Lemma}
\newtheorem{prp}[thm]{Proposition}
\newtheorem{exa}[thm]{Example}
\theoremstyle{definition}
\newtheorem{defn}{Definition}[section]
\definecolor{wco}{rgb}{0.5,0.2,0.3}
\numberwithin{equation}{section}
\theoremstyle{remark}
\newtheorem{rem}{Remark}[section]
\def\<{\langle} \def\>{\rangle}
 \def\beq{\begin{equation}}
 \def\ee{\varepsilon}
\def\[{{\Big[}}
\def\]{{\Big]}}
\def\({{\Big(}}
\def\){{\Big)}}
\title{{\bf  Existence and Uniqueness of Solutions to Nonlinear Evolution Equations
 with Locally Monotone Operators}
\footnote{Supported in part by DFG--Internationales Graduiertenkolleg
``Stochastics and Real World Models'', the SFB-701 and the BiBoS-Research Center. 
The support of  Issac Newton Institute for Mathematical 
Sciences in Cambridge is also gratefully acknowledged where part of this work was done during the special semester on 
``Stochastic Partial Differential Equations''.} }
\author{{\bf  Wei Liu
\footnote{Corresponding author: wei.liu@uni-bielefeld.de}
}\\
{\footnotesize  Fakult\"at f\"ur Mathematik, Universit\"at Bielefeld,
D-33501 Bielefeld, Germany}\\
}
\date{}
\begin{document}
\maketitle

\begin{abstract}
In this paper we establish the existence and uniqueness of solutions
for nonlinear evolution equations on Banach  space with locally
monotone operators, which is a generalization of the classical
result by J.L. Lions for monotone operators. In particular, we show
that local monotonicity implies the pseudo-monotonicity.
 The main result is applied to various types of PDE such as reaction-diffusion equations, generalized Burgers equation,
  Navier-Stokes equation, 3D Leray-$\alpha$ model and
  $p$-Laplace equation  with non-monotone perturbations.
\end{abstract}
\noindent
 AMS Subject Classification:\ 35K55, 34G20, 35Q30 \\
\noindent
 Keywords: nonlinear evolution equation; locally monotone; pseudo-monotone; Navier-Stokes equation;
 Leray-$\alpha$ model;
  Burgers equation; porous medium equation;
$p$-Laplace equation; reaction-diffusion equation.

\bigbreak

\section{Main results}

Consider the following Gelfand triple
$$V \subseteq H\equiv H^*\subseteq V^*,$$
 $i.e.$  $(H, \<\cdot,\cdot\>_H)$ is a real separable
Hilbert space and identified with its dual space $H^*$ by the Riesz
isomorphism, $V$ is a real reflexive  Banach space such that it is
 continuously and densely embedded into $H$. If  $\<\cdot,\cdot\>_V$ denotes  the
 dualization
between  $V$ and its dual space $V^*$,  then it follows that
$$ \<u, v\>_V=\<u, v\>_H, \ \  u\in H ,v\in V.$$
The main aim of this paper is to establish the existence and uniqueness of
solutions for general nonlinear evolution equations
\begin{equation}\label{1.1}
 u'(t)=A(t,u(t))+b(t), \ 0<t<T,  \ u(0)=u_0\in H,
\end{equation}
where $T>0$, $u'$ is the generalized derivative of $u$ on $(0,T)$ and $A:[0,T]\times V\rightarrow V^*, b:[0,T]\rightarrow V^*$ is measurable, $i.e.$  for each
$u\in L^1([0,T]; V)$, $A(t,u(t))$ is $V^*$-measurable on $[0,T]$.

It's well known that (\ref{1.1}) has a unique solution if $A$
satisfies the monotone and coercivity conditions (cf.
\cite{Li69,KR79,Z90}). The proof is mainly based on the Galerkin
approximation and the monotonicity tricks. The theory of monotone
operators started from the substantial work of Minty
\cite{Mi62,Mi63}, then it was studied systematically by Browder
\cite{Bro63,Bro64} in order to obtain existence theorems for
quasi-linear elliptic and parabolic partial differential equations.
The existence results of Browder were generalized to more general
classes of quasi-linear elliptic differential equations by Leray and
Lions \cite{LL65}, and Hartman and Stampacchia \cite{HS66}.  We
refer to \cite{Br73,Li69,Sh97,Z90} for more detailed exposition and
references.

One of most important extensions of monotone operator is the pseudo-monotone operator, which was
 first introduced
 by Br\'{e}zis in \cite{Br68}. The prototype of a pseudo-monotone operator is the sum of a monotone operator and
a strongly continuous operator ($i.e.$ a
 operator maps a weakly convergent sequence into a strongly convergent sequence). Hence
the theory of
pseudo-monotone operator unifies both the monotonicity arguments and the
compactness arguments. For example, it can be applied to show the existence of  solutions
for general quasi-linear elliptic equations with lower order terms which satisfy no monotonicity condition (cf. \cite{Bro72,Sh97,Z90}).

This variational approach  has also been adapted for analyzing stochastic partial differential equations (SPDE).
 The existence and uniqueness of solutions to SPDE was first developed by Pardoux \cite{Par75}, Krylov and
Rozovskii \cite{KR79}, we refer to \cite{G,RRW} for further
generalizations. Within this framework many different types of
properties have  been established recently, $e.g.$ see
\cite{L08b,RZ} for the small noise large deviation principle,
\cite{GM09} for discretization approximation schemes to the solution
of SPDE,  \cite{L08,LW08,W07} for the  Harnack inequality and
consequent ergodicity, compactness and contractivity for the
associated transition semigroups, and \cite{L10,BGLR,GLR} for the
invariance of subspaces and existence of random attractors for
corresponding random dynamical systems.

 In this work we  establish the existence, uniqueness and continuous dependence on initial conditions
 of solutions to (\ref{1.1})
by using  the local monotonicity condition  instead of the classical
monotonicity condition. The analogous result for
stochastic PDE has been established in \cite{LR10}. The standard
growth condition on $A$ (cf. \cite{Li69,KR79,Z90}) is also replaced by a much weaker condition
 such that the main result
can be applied to larger class of examples. One of the key observations is
that we show the local monotonicity implies the pseudo-monotonicity
(see Lemma \ref{L2.1}), which may have some independent interests.
The main result is applied
 to establish the existence and uniqueness of solutions for a large class of nonlinear evolution equations
such as reaction diffusion equations, generalized Burgers type equations, generalized
 $p$-Laplace equations, 2-D Navier-Stokes
equation and 3D Leray-$\alpha$ model of turbulence.

  Suppose for  fixed $\alpha>1, \beta\ge 0$  there exist constants $\delta>0$,
  $C$ and
 a positive function $f\in L^1([0,T];  \mathbb{R})$
 such that the following conditions hold for all $t\in[0,T]$ and $v,v_1,v_2\in V$.
 \begin{enumerate}
 \item [$(H1)$] (Hemicontinuity)
      The map  $ s\mapsto \<A(t,v_1+s v_2),v\>_V$ is  continuous on $\mathbb{R}$.

\item[$(H2)$] (Local monotonicity)
     $$  \<A(t,v_1)-A(t, v_2), v_1-v_2\>_V
     \le \left(C+\rho(v_1)+\eta(v_2) \right)  \|v_1-v_2\|_H^2, $$
where $\rho,\eta: V\rightarrow [0,+\infty)$ are measurable functions
and locally bounded  in $V$.

  \item [$(H3)$] (Coercivity)
    $$ 2 \<A(t,v), v\>_V  \le  -\delta
    \|v\|_V^{\alpha}  +C\|v\|_H^2+ f(t).$$

  \item[$(H4)$] (Growth)
   $$ \|A(t,v)\|_{V^*} \le \bigg(  f(t)^{\frac{\alpha-1}{\alpha}} +
   C\|v\|_V^{\alpha-1} \bigg)  \bigg( 1+ \|v\|_H^{\beta} \bigg).$$

\end{enumerate}

\begin{rem}
(1) If $\beta=0$ and $\rho=\eta\equiv 0$, then $(H1)-(H4)$ are the
classical monotone and coercive conditions in \cite[Theorem
30.A]{Z90} (see also \cite{Li69,KR79,PR07}). It can be applied to
many quasi-linear PDE such as porous medium equation and $p$-Laplace
equation (cf. \cite{Z90,PR07}).

(2) One typical form of $(H2)$ in applications  is
$$\rho(v)=\eta(v)=C\|v\|^\gamma,$$
where $\|\cdot\|$ is some norm on $V$ and $C,\gamma$ are some
constants. The typical examples are 2-D Navier-Stokes equation on
bounded or unbounded domain and Burgers equation, which satisfy
$(H2)$ but do not satisfy the classical monotonicity condition
($i.e.$ $\rho=\eta\equiv0$). We refer to section 3 for more
examples.

(3) If $\rho\equiv 0$ in $(H2)$, then the existence and uniqueness
of solutions to (\ref{1.1}) with general random noise has been
established in \cite{LR10} by using some different techniques.

(4) $(H4)$ is also weaker than the following standard growth condition
assumed in the literature (cf. \cite{KR79,Z90,PR07}:
\begin{equation}\label{growth}
 \|A(t,v)\|_{V^*} \le  f(t)^{\frac{\alpha-1}{\alpha}}+
   C\|v\|_V^{\alpha-1} .
\end{equation}
The advantage of $(H4)$ is, $e.g.$, to include many semilinear type
equations with nonlinear perturbation terms. For example, if we
consider the reaction-diffusion type equation, $i.e.$  $A(u)=\Delta
u+F(u)$, then for verifying the coercivity $(H3)$ we have
$\alpha=2$. Hence (\ref{growth}) implies that $F$  has at most
linear growth. However, we  can allow $F$ to have some polynomial
growth by using $(H4)$ here. We refer to section 3 for more details.
\end{rem}

Now we can state the main result, which gives a unified framework to analyze various classes of nonlinear
evolution equations.

\begin{thm}\label{T1}
Suppose that $V \subseteq H$ is compact and $(H1)$-$(H4)$ hold, then  for any $u_0\in H,\ b\in L^{\frac{\alpha}{\alpha-1}}([0,T];V^*)$
 $(\ref{1.1})$ has a  solution
$$u\in L^\alpha([0,T];V)\cap C([0,T];H), \     u'\in  L^{\frac{\alpha}{\alpha-1}}([0,T];V^*)$$
such that
$$ \<u(t), v\>_H=\<u_0,v\>_H + \int_0^t \<A(s,u(s))+b(s), v\>_V d s , \ t\in[0,T], v\in V.  $$
Moreover,  if there exist constants $C$ and $\gamma$ such that
\begin{equation}\label{c3}
  \rho(v)+\eta(v) \le C(1+\|v\|_V^\alpha) (1+\|v\|_H^\gamma), \
v\in V,
\end{equation}
 then the solution of $(\ref{1.1})$ is unique.
\end{thm}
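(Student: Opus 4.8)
The plan is to follow the Galerkin method of Lions, using the compact embedding $V\subseteq H$ for compactness and the pseudo-monotonicity supplied by Lemma \ref{L2.1} to identify the limit. I fix vectors $\{e_i\}_{i\ge 1}\subseteq V$ whose span is dense in $V$ (hence in $H$), chosen so that the $H$-orthogonal projections $P_n$ onto $H_n:=\mathrm{span}\{e_1,\dots,e_n\}$ are stable on $V$, and seek $u_n(t)=\sum_{i=1}^n c^n_i(t)e_i$ solving $\langle u_n'(t),e_j\rangle_H=\langle A(t,u_n(t))+b(t),e_j\rangle_V$ for $1\le j\le n$ with $u_n(0)=P_nu_0$. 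By $(H1)$ the finite-dimensional vector field is continuous in the unknowns and, by $(H4)$, satisfies Carath\'eodory-type bounds in $t$, so local solutions exist; the a priori estimates below then extend them to all of $[0,T]$.

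The a priori bounds come from applying the chain rule to $\|u_n(t)\|_H^2$ and invoking $(H3)$ together with Young's inequality on the term $2\langle b(t),u_n(t)\rangle_V$, which gives $\frac{d}{dt}\|u_n(t)\|_H^2\le-\frac{\delta}{2}\|u_n(t)\|_V^\alpha+C\|u_n(t)\|_H^2+f(t)+C\|b(t)\|_{V^*}^{\frac{\alpha}{\alpha-1}}$; Gronwall's lemma then controls $\sup_t\|u_n(t)\|_H$ and, after integration, $\|u_n\|_{L^\alpha([0,T];V)}$, uniformly in $n$. The point of the weaker growth $(H4)$ is that this uniform $H$-bound makes the factor $1+\|u_n\|_H^\beta$ bounded, so $(H4)$ still yields a uniform bound on $A(\cdot,u_n(\cdot))$ in $L^{\frac{\alpha}{\alpha-1}}([0,T];V^*)$, and hence, by $V$-stability of $P_n$, on $u_n'$ in the same space. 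By reflexivity I extract a subsequence with $u_n\rightharpoonup u$ in $L^\alpha([0,T];V)$ and $A(\cdot,u_n)\rightharpoonup\chi$ in $L^{\frac{\alpha}{\alpha-1}}([0,T];V^*)$; testing the Galerkin system against $\bigcup_m H_m$ and letting $n\to\infty$ gives $u'=\chi+b$ with $u(0)=u_0$. Since $u\in L^\alpha([0,T];V)$ and $u'\in L^{\frac{\alpha}{\alpha-1}}([0,T];V^*)$, the standard result for the Gelfand triple yields $u\in C([0,T];H)$ together with the energy equality $\frac12\|u(t)\|_H^2=\frac12\|u_0\|_H^2+\int_0^t\langle\chi+b,u\rangle_V\,ds$.

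The heart of the proof, and the step I expect to be the main obstacle, is showing $\chi=A(\cdot,u)$. Here the compactness of $V\subseteq H$ enters decisively: the uniform bounds on $u_n$ in $L^\alpha([0,T];V)$ and on $u_n'$ in $L^{\frac{\alpha}{\alpha-1}}([0,T];V^*)$ let the Aubin--Lions--Simon lemma produce a further subsequence with $u_n\to u$ strongly in $L^\alpha([0,T];H)$, whence $u_n(t)\to u(t)$ in $H$ for almost every $t$. Fixing such a time $t^*$ arbitrarily close to $T$ and running the energy identity on $[0,t^*]$, the strong convergence at $t^*$ combined with $A(\cdot,u_n)\rightharpoonup\chi$ gives $\limsup_n\int_0^{t^*}\langle A(s,u_n),u_n-u\rangle_V\,ds\le 0$; this circumvents the usual endpoint difficulty, since weak lower semicontinuity of $\|\cdot\|_H$ alone would only control the wrong direction at $t=T$. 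The strong $L^\alpha([0,T];H)$ convergence also tames the locally bounded coefficients $\rho,\eta$ in $(H2)$, so Lemma \ref{L2.1} applies on $[0,t^*]$ and forces $\chi=A(\cdot,u)$ there; letting $t^*\uparrow T$ identifies $\chi=A(\cdot,u)$ on all of $[0,T]$. Thus $u$ solves (\ref{1.1}) and the asserted integral identity follows from the energy equality.

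For uniqueness under (\ref{c3}), let $u,v$ be two solutions and $w=u-v$. The chain rule and $(H2)$ give $\frac{d}{dt}\|w(t)\|_H^2=2\langle A(t,u)-A(t,v),u-v\rangle_V\le 2\big(C+\rho(u(t))+\eta(v(t))\big)\|w(t)\|_H^2$. Because $u,v\in C([0,T];H)$ have bounded $H$-norms while $\|u\|_V^\alpha,\|v\|_V^\alpha\in L^1([0,T])$, condition (\ref{c3}) makes $g(t):=2\big(C+\rho(u(t))+\eta(v(t))\big)$ integrable on $[0,T]$. Since $w(0)=0$, Gronwall's inequality forces $\|w(t)\|_H^2\le\|w(0)\|_H^2\exp\big(\int_0^t g(s)\,ds\big)=0$, so $u=v$; applied to distinct initial data the same estimate yields the continuous dependence $\|u(t)-v(t)\|_H^2\le\|u_0-v_0\|_H^2\exp\big(\int_0^t g(s)\,ds\big)$.
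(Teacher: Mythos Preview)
Your overall architecture (Galerkin approximation, coercivity-based a priori estimates, extraction of weak limits, Gronwall for uniqueness) matches the paper, and the a priori bounds and uniqueness argument are correctly sketched. But the identification step $\chi=A(\cdot,u)$ has a real gap, and there is also a misconception along the way.

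The misconception: weak lower semicontinuity of $\|\cdot\|_H$ at $t=T$ is \emph{not} the wrong direction. From $u_n(T)\rightharpoonup u(T)$ in $H$ one gets $\liminf_n\|u_n(T)\|_H^2\ge\|u(T)\|_H^2$, and via the two energy identities this yields $\liminf_n\int_0^T\langle A(s,u_n),u_n\rangle_V\,ds\ge\int_0^T\langle\chi,u\rangle_V\,ds$, which is exactly the hypothesis the paper uses (condition (\ref{2.3}) feeding into Lemma~\ref{L2.5}). Your Aubin--Lions detour to a point $t^*<T$ of strong $H$-convergence is therefore unnecessary; it also forces you to assume $V$-stable projections $P_n$ in order to bound $u_n'$ in $L^{\alpha/(\alpha-1)}([0,T];V^*)$, an extra ingredient the paper does not need.

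The genuine gap is the sentence ``Lemma~\ref{L2.1} applies on $[0,t^*]$ and forces $\chi=A(\cdot,u)$.'' Lemma~\ref{L2.1} only says that $A(t,\cdot):V\to V^*$ is pseudo-monotone for each \emph{fixed} $t$; it says nothing about the Nemytskii operator $u\mapsto A(\cdot,u(\cdot))$ acting on $L^\alpha([0,T];V)$. Lifting pointwise pseudo-monotonicity to the space--time level is precisely the technical heart of the paper, carried out in Lemma~\ref{L2.5}: one first establishes the pointwise bound $\limsup_n\langle A(t,u_n(t)),u_n(t)-u(t)\rangle_V\le 0$ for each $t$ (via Proposition~\ref{condition P}), then combines this with the integral inequality and a Fatou argument to obtain $\lim_n\int_0^T\langle A(t,u_n),u_n-u\rangle_V\,dt=0$, then extracts a subsequence for which the integrand tends to zero almost everywhere, and finally applies pointwise pseudo-monotonicity together with a second Fatou step to conclude for arbitrary test functions $v\in X$. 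None of this machinery appears in your sketch. Your remark that strong $L^\alpha([0,T];H)$ convergence ``tames the locally bounded coefficients $\rho,\eta$'' does not close the gap either: $\rho$ and $\eta$ are only locally bounded in $V$, not in $H$, so without the quantitative bound (\ref{c3}) --- which is assumed only for uniqueness, not for existence --- you cannot control $\int_0^T\rho(u_n(t))\|u_n(t)-u(t)\|_H^2\,dt$ and hence cannot exploit $(H2)$ directly at the space--time level.
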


\begin{rem}\label{R1}
(1) The proof is based on  Galerkin approximation. Moreover, by the Lions-Aubin theorem
(cf. \cite[Chapter III, Proposition 1.3]{Sh97}), the compact embedding of $V\subseteq H$ implies the following embedding
$$  W^1_\alpha(0,T;V,H):=\{ u\in L^\alpha([0,T];V): u'\in L^{\frac{\alpha}{\alpha-1}}([0,T];V^*)  \} \subseteq L^\alpha(0,T;H)     $$
is also compact. Hence there exists a subsequence of the solutions of the Galerkin
approximated equations (see (\ref{2.1}) in Section 2) strongly converges to the solution of (\ref{1.1})
in $L^\alpha(0,T;H)$.

 (2) One can easily see from the proof that the solution of $(\ref{1.1})$ is  unique
  if all solutions of $(\ref{1.1})$
satisfy
$$ \int_0^T \left( \rho(u(s)) +\eta(u(s)) \right) d s<\infty.  $$

(3) The compact embedding $V \subseteq H$ is  required in the main
result. For (global) monotonicity one can easily drop this
assumption. In fact, the classical monotonicity tricks only works in
general for the operator satisfies $(H2)$ with $C=\rho=\eta=0$. For
$C>0$ (but $\rho=\eta=0$) one can apply a standard exponential
transformation to (\ref{1.1}) to reduce the case $C>0$ to the case
$C=0$. However, this kind of techniques does not work for the
locally
 monotone case. In order to verify the pseudo-monotonicity of $A(t,\cdot)$, we have to
split it into the sum of $A(t,\cdot)-cI$ and $cI$.  And  $I$ is
pseudo-monotone if and only if the embedding $V \subset H$ is
compact.


(4) We can also establish a similar result for stochastic evolution equations in Hilbert space with additive
noise:
\begin{equation}\label{SDE}
d X(t)=A(t, X(t))dt + B d N(t), \ t\ge 0, \ X(0)=x.
\end{equation}
Here  $A:[0,T]\times V\rightarrow V^*$ and $B\in L(U; H)$, where $U$ is another Hilbert space and
$N(t)$ is  a $U$-valued adapted stochastic process definded on a
filtered probability space $(\Omega,\mathcal{F},\mathcal{F}_t,\mathbb{P})$ (cf. \cite{GLR,PR07}).
By a standard transformation (substitution), (\ref{SDE}) can be reduced to deterministic evolution
 equations with a random parameter which Thoerem \ref{T1} can be applied to. This result and some further applications
will be investigated in a separated paper.
\end{rem}


Next result is the continuous dependence of solution of $(\ref{1.1})$ on $u_0$ and $b$.
\begin{thm}\label{T2}
Suppose that $V \subseteq H$ is compact and $(H1)$-$(H4)$ hold, $u_i$ are the solution of $(\ref{1.1})$
with $u_{i,0}\in H$ and $b_i\in L^{\frac{\alpha}{\alpha-1}}([0,T];V^*)\cap L^{2}([0,T]; H)$, $i=1,2$ respectively and
 satisfy
$$ \int_0^T\left( \rho(u_1(s))+\eta(u_2(s)) \right)  d s<\infty.  $$
Then there exists a constant $C$ such that
\begin{equation}
 \begin{split}
      \|u_1(t)-u_2(t)\|_H^2
\le &   \exp\left[\int_0^t \left(C+\rho(u_1(s))+\eta(u_2(s)) \right) d s  \right]\\
 & \ \  \cdot \left( \|u_{1,0}-u_{2,0}\|_H^2 +\int_0^t \|b_1(s)-b_2(s)\|_H^2 ds \right), \ t\in[0, T].
 \end{split}
\end{equation}
\end{thm}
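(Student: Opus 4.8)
The plan is to derive a differential inequality for $\|u_1(t)-u_2(t)\|_H^2$ and then apply Gronwall's lemma. First I would set $w(t):=u_1(t)-u_2(t)$, which satisfies $w'(t)=A(t,u_1(t))-A(t,u_2(t))+b_1(t)-b_2(t)$ in $V^*$ with $w(0)=u_{1,0}-u_{2,0}$. Since each $u_i\in L^\alpha([0,T];V)$ with $u_i'\in L^{\alpha/(\alpha-1)}([0,T];V^*)$, the difference $w$ lies in the space $W^1_\alpha(0,T;V,H)$, and the standard integration-by-parts formula for such functions (cf. \cite[Chapter III]{Sh97}) gives that $t\mapsto\|w(t)\|_H^2$ is absolutely continuous with
\begin{equation*}
\frac{d}{dt}\|w(t)\|_H^2 = 2\,\<w'(t),w(t)\>_V = 2\,\<A(t,u_1(t))-A(t,u_2(t)),w(t)\>_V + 2\,\<b_1(t)-b_2(t),w(t)\>_V
\end{equation*}
for a.e. $t\in[0,T]$.

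Next I would estimate the two terms on the right. For the operator term I invoke the local monotonicity assumption $(H2)$ directly, which bounds it by $2\big(C+\rho(u_1(t))+\eta(u_2(t))\big)\|w(t)\|_H^2$. For the forcing term, since $b_i\in L^2([0,T];H)$ I use the identification $\<\cdot,\cdot\>_V=\<\cdot,\cdot\>_H$ on $H$ together with Cauchy--Schwarz and Young's inequality to get $2\,\<b_1(t)-b_2(t),w(t)\>_H\le \|b_1(t)-b_2(t)\|_H^2+\|w(t)\|_H^2$. Combining these yields the pointwise differential inequality
\begin{equation*}
\frac{d}{dt}\|w(t)\|_H^2 \le \big(C+\rho(u_1(t))+\eta(u_2(t))\big)\|w(t)\|_H^2 + \|b_1(t)-b_2(t)\|_H^2,
\end{equation*}
after absorbing the extra constants into $C$.

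Finally I would apply the integral form of Gronwall's inequality. Writing $\phi(t):=C+\rho(u_1(t))+\eta(u_2(t))$, the hypothesis $\int_0^T(\rho(u_1)+\eta(u_2))\,ds<\infty$ guarantees $\phi\in L^1([0,T])$, so the exponential factor $\exp\big(\int_0^t\phi(s)\,ds\big)$ is well defined and finite; this is precisely what makes the local monotonicity usable here rather than a global Lipschitz bound. Gronwall's lemma then converts the differential inequality into the claimed estimate, with the initial data $\|u_{1,0}-u_{2,0}\|_H^2$ and the accumulated forcing $\int_0^t\|b_1(s)-b_2(s)\|_H^2\,ds$ appearing inside the parentheses.

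The main obstacle, and the only point needing genuine care, is justifying the integration-by-parts identity for $\frac{d}{dt}\|w(t)\|_H^2$: this requires that $w$ belong to $W^1_\alpha(0,T;V,H)$ so that the chain rule in the Gelfand-triple setting applies and the pointwise identity holds for almost every $t$. Once that regularity is in hand, the remaining steps are routine applications of $(H2)$, Young's inequality, and Gronwall, with the $L^1$-integrability of $\rho(u_1)+\eta(u_2)$ ensuring the final bound is finite.
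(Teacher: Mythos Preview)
Your proposal is correct and follows essentially the same route as the paper: apply the integration-by-parts formula to $\|u_1(t)-u_2(t)\|_H^2$, bound the $A$-term via $(H2)$, handle the forcing term by Cauchy--Schwarz and Young, and conclude with Gronwall using the $L^1$-integrability of $\rho(u_1)+\eta(u_2)$. The only cosmetic difference is that the paper works with the integral form of the identity rather than the differential form you use, but the argument is otherwise identical.
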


The paper is organized as follows. The proofs of the main results are given in the next section.
In Section 3 we apply the main results to several concrete semilinear and quasi-linear evolution
equations on Banach space.

\section{Proofs of Main Theorems}
\subsection{Proof of Theorem \ref{T1}}
In order to make the proof easier to follow, we first give the outline of the proof for the reader's convenience.

Step 1:  Galerkin approximation; local monotonicity and coercivity
implies the existence (and uniqueness) of  solutions to the approximated equations;

Step 2: A priori estimates was obtained from coercivity;

Step 3: Verify the weak limits by using modified  monotonicity
tricks;

Step 4: Uniqueness follows from local monotonicity.

The main difficulty is in the third step. The classical monotonicity tricks does not
 work for locally monotone operators.
The crucial part for overcoming this difficulty is the following result:
every locally monotone operator is pseudo-monotone.
Then by using some techniques related with pseudo-monotonicity one can
 establish the existence of  solutions.

We first recall the definition of  pseudo-monotone operator
introduced  first
 by Br\'{e}zis in \cite{Br68}. We use the standard notation ``$\rightharpoonup$''
for weak convergence in Banach space.

\begin{defn} The operator $A: V\rightarrow V^*$ is called pseudo-monotone  if $v_n\rightharpoonup v$ in $V$ and
$$     \liminf_{n\rightarrow\infty} \<A(v_n), v_n-v\>_V\ge 0 $$
implies for all $u\in V$
$$   \<A(v), v-u\>_V \ge  \limsup_{n\rightarrow\infty} \<A(v_n), v_n-u\>_V.  $$
 \end{defn}

\begin{rem} (1)
We remark that the definition of pseudo-monotone operator here coincides with the definition in
\cite{Z90} (one should replace $A$ here by $-A$ in \cite{Z90} due to
different form of the formulation for  evolution equations).

(2) The class of pseudo-monotone operators  is stable under
summation ($i.e.$  the sum of two pseudo-monotone operators is still
pseudo-monotone) and strictly smaller than the class of operator of
type (M) (cf. \cite{Z90,Sh97}). And note that the class of operator
of type (M) is not stable under summation. A counterexample can be
found in \cite{Sh97}.
\end{rem}

\begin{prp}\label{condition P}
 If $A$ is pseudo-monotone, then $v_n\rightharpoonup v$ in $V$ implies that
$$  \liminf_{n\rightarrow\infty}   \<A(v_n), v_n-v\>_V\le 0.         $$
 \end{prp}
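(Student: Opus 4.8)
The plan is to argue by contradiction and to exploit the definition of pseudo-monotonicity with the single well-chosen test element $u=v$. No regularity, coercivity, or compactness is needed; the statement is a purely formal consequence of the definition.

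First I would suppose, contrary to the claim, that $\liminf_{n\to\infty}\langle A(v_n),v_n-v\rangle_V>0$. This strict inequality in particular forces $\liminf_{n\to\infty}\langle A(v_n),v_n-v\rangle_V\ge 0$, which is precisely the hypothesis entering the definition of pseudo-monotonicity, the weak convergence $v_n\rightharpoonup v$ being given by assumption. Hence the defining implication applies and yields, for every $u\in V$, the inequality $\langle A(v),v-u\rangle_V\ge\limsup_{n\to\infty}\langle A(v_n),v_n-u\rangle_V$.

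The key step is then to specialize this conclusion to $u=v$. The left-hand side collapses to $\langle A(v),v-v\rangle_V=0$, while on the right-hand side we have $\limsup_{n\to\infty}\langle A(v_n),v_n-v\rangle_V\ge\liminf_{n\to\infty}\langle A(v_n),v_n-v\rangle_V>0$ by the standing assumption. Combining the two inequalities gives $0\ge\limsup_{n\to\infty}\langle A(v_n),v_n-v\rangle_V>0$, which is absurd. Therefore the assumption is untenable and we conclude $\liminf_{n\to\infty}\langle A(v_n),v_n-v\rangle_V\le 0$, as asserted.

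I do not anticipate any genuine obstacle: the only idea required is the substitution $u=v$, which annihilates the term $\langle A(v),v-u\rangle_V$ and converts the defining implication directly into the desired bound. The proof is a short logical deduction from the definition rather than an analytic estimate.
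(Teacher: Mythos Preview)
Your proof is correct and follows essentially the same route as the paper: assume the liminf is strictly positive, invoke the defining implication of pseudo-monotonicity, and set $u=v$ to derive a contradiction. The only cosmetic difference is that the paper first passes to a subsequence along which the liminf is realized as a limit before applying pseudo-monotonicity; as your argument shows, this extraction is unnecessary since $\limsup\ge\liminf$ already yields the contradiction directly.
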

\begin{proof}
If the conclusion is not true, then there exists $v_n\rightharpoonup
v$ in $V$ such that
$$  \liminf_{n\rightarrow\infty}   \<A(v_n), v_n-v\>_V> 0.         $$
Then we can extract a subsequence  such that $v_{n_k}\rightharpoonup
v$ and
\begin{equation}\label{contradiction}
  \lim_{k\rightarrow\infty}   \<A(v_{n_k}), v_{n_k}-v\>_V> 0.
\end{equation}
Then by the pseudo-monotonicity of $A$ we have for all $u\in V$
$$  \<A(v), v-u\>_V \ge  \limsup_{k\rightarrow\infty} \<A(v_{n_k}), v_{n_k}-u\>_V.  $$
By taking $u=v$ we obtain
 $$ \limsup_{k\rightarrow\infty} \<A(v_{n_k}), v_{n_k}-v\>_V\le 0,$$
which is a contradiction to (\ref{contradiction}).

Hence the proof is completed.
\end{proof}

\begin{rem}
We also recall a slightly modified definition of  pseudo-monotone operator by Browder
 (cf. \cite{Bro77}):
The operator $A: V\rightarrow V^*$ is called pseudo-monotone  if
$v_n\rightharpoonup v$ in $V$ and
$$     \liminf_{n\rightarrow\infty} \<A(v_n), v_n-v\>_V\ge 0 $$
implies
$$   A(v_n)\rightharpoonup A(v)\ \    \text{and}   \  \
\lim_{n\rightarrow\infty} \<A(v_n), v_n\>_V=\<A(v), v\>_V.  $$
 From this definition one  clearly see the role of pseudo-monotone operator for
verifying the limit of weakly convergent sequence under
nonlinear operator.

If $A$ is bounded ($i.e.$  $A$ maps bounded set
into  bounded set), then it's easy to show that these two
definitions are equivalent by Proposition \ref{condition P}. In particular, under the
assumption of $(H4)$,  these two
definitions are equivalent.
\end{rem}

\begin{lem}\label{L2.1}
 If the  embedding $V\subseteq H$ is compact, then $(H1)$ and $(H2)$  implies that
$A(t,\cdot)$ is pseudo-monotone for any $t\in[0,T]$.
\end{lem}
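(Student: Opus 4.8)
The plan is to verify the Br\'{e}zis definition of pseudo-monotonicity directly. Fix $t\in[0,T]$ and abbreviate $A(\cdot)=A(t,\cdot)$. Suppose $v_n\rightharpoonup v$ in $V$ with $\liminf_{n\to\infty}\<A(v_n),v_n-v\>_V\ge 0$. The first thing I would record is that, being weakly convergent, $\{v_n\}$ is bounded in $V$; since $\rho,\eta$ are locally bounded on $V$, this yields a uniform bound $\rho(v_n)\le M$ for all $n$, while $\eta(v)<\infty$. Crucially, the compactness of $V\subseteq H$ upgrades the weak $V$-convergence to strong convergence $v_n\to v$ in $H$, so that $\|v_n-v\|_H\to 0$. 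Applying $(H2)$ with $v_1=v_n$, $v_2=v$ gives
$$\<A(v_n)-A(v),v_n-v\>_V\le (C+\rho(v_n)+\eta(v))\,\|v_n-v\|_H^2;$$
the bracket is bounded by a constant and $\|v_n-v\|_H^2\to0$, so the right side tends to $0$, while $\<A(v),v_n-v\>_V\to0$ by weak convergence. Hence $\limsup_n\<A(v_n),v_n-v\>_V\le0$, which together with the hypothesis forces $\lim_n\<A(v_n),v_n-v\>_V=0$.

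Next I would run a Minty-type linearization. Fix $u\in V$ and set $v_s=v+s(u-v)$ for $s\in(0,1]$. Writing $v_n-v_s=(v_n-v)+s(v-u)$ and applying $(H2)$ with $v_1=v_n$, $v_2=v_s$, I take $\limsup_{n\to\infty}$: the term $\<A(v_n),v_n-v\>_V$ vanishes by the previous step, the linear functional $\<A(v_s),\cdot\>_V$ passes to the weak limit, and the quadratic remainder is controlled by $\|v_n-v_s\|_H^2\to s^2\|v-u\|_H^2$, with coefficient $C+\rho(v_n)+\eta(v_s)\le K$ uniform in both $n$ and $s\in[0,1]$ (using $\rho(v_n)\le M$ and local boundedness of $\eta$ along the compact segment $\{v_s\}$). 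This yields
$$s\,\limsup_{n\to\infty}\<A(v_n),v-u\>_V\le s\,\<A(v_s),v-u\>_V+K s^2\|v-u\|_H^2.$$
Dividing by $s>0$ and letting $s\downarrow0$, hemicontinuity $(H1)$ gives $\<A(v_s),v-u\>_V\to\<A(v),v-u\>_V$ and the error $Ks\|v-u\|_H^2\to0$, so that $\limsup_n\<A(v_n),v-u\>_V\le\<A(v),v-u\>_V$.

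To conclude, I split $\<A(v_n),v_n-u\>_V=\<A(v_n),v_n-v\>_V+\<A(v_n),v-u\>_V$; the first summand converges to $0$ and the second is bounded as above, giving $\limsup_n\<A(v_n),v_n-u\>_V\le\<A(v),v-u\>_V$ for every $u\in V$. This is precisely the pseudo-monotonicity of $A(t,\cdot)$.

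I expect the main obstacle to be the linearization step. The difficulty is that $(H2)$ only controls the monotonicity defect in the \emph{weaker} $H$-norm, so one cannot pass to the limit in $n$ directly in $\<A(v_n),v-u\>_V$; the device of testing against the segment $v_s$, factoring out $s$, and only afterwards sending $s\to0$ through hemicontinuity is what converts the weak $H$-estimate into the required $V^*$-statement. Keeping the constant $K$ uniform in $s$ (so the $O(s^2)$ error genuinely vanishes after dividing by $s$) is the precise point where local boundedness of $\rho$ and $\eta$, together with the strong $H$-convergence supplied by compactness, must be invoked with care.
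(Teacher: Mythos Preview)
Your proof is correct and follows essentially the same route as the paper: use compactness of $V\subseteq H$ to get $\|v_n-v\|_H\to 0$, combine with $(H2)$ to force $\lim_n\<A(v_n),v_n-v\>_V=0$, then run the Minty linearization along $v_s=v+s(u-v)$ and invoke hemicontinuity $(H1)$ as $s\downarrow 0$. The only cosmetic difference is that the paper first shifts to $A_0:=A-C_1 I$ (with $C_1$ a uniform bound for $C+\rho+\eta$ on the relevant ball) so that $A_0$ is genuinely monotone on the sequence and the Minty step has zero error term, whereas you keep the $H$-norm remainder explicit and observe it is $O(s^2)$, hence negligible after dividing by $s$; both organizations yield the same conclusion.
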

\begin{proof}
For simplicity, we denote $A(t,\cdot)$ by $A(\cdot)$ for any fixed $t\in[0,T]$.

 Suppose $v_n\rightharpoonup v$ in $V$ and
\begin{equation}\label{2.0}
 \liminf_{n\rightarrow\infty} \<A(v_n), v_n-v\>_V\ge 0,
\end{equation}
then for any  $u\in V$ we need to show
\begin{equation}\label{e1}
  \<A(v), v-u\>_V \ge  \limsup_{n\rightarrow\infty} \<A(v_n), v_n-u\>_V.
\end{equation}

Given $u$ and the constant $C$ in $(H2)$, we take
$$ K=\|u\|_V+ \|v\|_V+ \sup_n \|v_n\|_V; \ \  C_1=\sup_{v:\|v\|_V\le 2K} \left(C+\rho(v)+\eta(v) \right)< \infty.   $$
 Since the  embedding $V\subseteq H$ is compact, we have
$v_n\rightarrow v$ in $V^*$ and
$$   \<C_{1}v, v-u\>_V =  \lim_{n\rightarrow\infty} \<C_{1}v_n, v_n-u\>_V. $$
Hence for proving (\ref{e1}) it's sufficient to show that
$$   \<A_0(v), v-u\>_V \ge  \limsup_{n\rightarrow\infty} \<A_0(v_n), v_n-u\>_V, $$
where $A_0=A-C_{1}I$ ($I$ is the identity operator).

Then $(H2)$  implies that
$$  \limsup_{n\rightarrow\infty} \<A_0(v_n), v_n-v\>_V\le   \limsup_{n\rightarrow\infty}
\<A_0(v), v_n-v\>_V =0.  $$
By (\ref{2.0}) we obtain
\begin{equation}\label{2.2}
  \lim_{n\rightarrow\infty} \<A_0(v_n), v_n-v\>_V= 0.
\end{equation}
Let $z=v+t(u-v)$ with $t\in(0,\frac{1}{2})$, then
the local monotonicity $(H2)$ implies that
$$ \<A_0(v_n)-A_0(z),  v_n-z\>_V\le 0, $$
$i.e.$
$$ t \<A_0(z), v-u\>_V-(1-t)\<A_0(v_n), v_n-v\>_V \ge t\<A_0(v_n), v_n-u\>_V-\<A_0(z), v_n-v\>_V. $$
By taking $\limsup$ on both sides and using (\ref{2.2}) we have
$$    \<A_0(z), v-u\>_V\ge \limsup_{n\rightarrow\infty} \<A_0(v_n), v_n-u\>_V.  $$
Then letting $t\rightarrow 0$, by the hemicontinuity $(H1)$ we obtain
$$   \<A_0(v), v-u\>_V \ge  \limsup_{n\rightarrow\infty} \<A_0(v_n), v_n-u\>_V.  $$
Therefore, $A$ is pseudo-monotone.
\end{proof}

\begin{rem}
For some concrete operators, the local monotonicity $(H2)$  might be
easier to check by explicit calculations than the definition of
pseudo-monotonicity. Hence the above result can be also seen as a
computable sufficient condition for the pseudo-monotonicity in
applications.
\end{rem}

The proof of  Theorem  \ref{T1} is split into a few lemmas. Let
$X:=L^\alpha([0,T];V)$, then
$X^*=L^{\frac{\alpha}{\alpha-1}}([0,T];V^*)$. We denote by
$W^1_\alpha(0,T;V, H)$ the Banach space
$$ W^1_\alpha(0,T;V, H)=\{ u\in X: u'\in X^*  \}, $$
where the norm is defined by
$$ \|u\|_W:=\|u\|_X+\|u'\|_{X^*}= \left(\int_0^T\|u(t)\|_V^\alpha dt\right)^{\frac{1}{\alpha}}+
\left(\int_0^T\|u'(t)\|_{V^*}^{\frac{\alpha}{\alpha-1}} dt\right)^{\frac{\alpha-1}{\alpha}}. $$
 It's well known that $W^1_\alpha(0,T;V, H)$ is a reflexive
Banach space and it is continuously imbedded into $C([0,T];H)$ (cf. \cite{Z90}). Moreover, we also have the following
integration by parts formula
\begin{equation*}
 \begin{split}
\<u(t), v(t)\>_H -\<u(0), v(0)\>_H=\int_0^t& \<u'(s), v(s)\>_V ds
+ \int_0^t \<v'(s), u(s)\>_V ds,\\
& \ t\in[0,T],  \ u,v\in W^1_\alpha(0,T; V,H).
 \end{split}
\end{equation*}

We first consider the Galerkin approximation to (\ref{1.1}).

 Let $\{e_1,e_2,\cdots \}\subset V$ be an orthonormal basis in $H$ and
 let $H_n:=span\{e_1,\cdots,e_n\}$ such that $span\{e_1,e_2,\cdots\}$ is dense in $V$. Let $P_n:V^*\rightarrow H_n$ be defined by
$$ P_ny:=\sum_{i=1}^n \<y,e_i\>_V e_i, \ y\in V^*.  $$
Obviously, $P_n|_H$ is just the orthogonal projection onto $H_n$ in H and we have
$$ \<P_nA(t,u), v\>_V=\<P_nA(t,u),v\>_H=\<A(t,u),v\>_V, \ u\in V, v\in H_n.  $$

For each finite $n\in \mathbb{N}$ we consider the following evolution equation on $H_n$:
\begin{equation}\label{2.1}
 u_n'(t)=P_nA(t,u_n(t))+P_n b(t), \  0<t<T, \ u_n(0)=P_nu_0\in H_n.
\end{equation}
It is easy to show that $P_nA$ is locally monotone and coercive on $H_n$ (finite dimensional space). According to the classical result of
 Krylov (cf. \cite{K99} or \cite[Theorem 3.1.1]{PR07}), there exists a unique solution $u_n$ to (\ref{2.1}) such that
 $$u_n\in L^\alpha([0,T];H_n)\cap C([0,T];H_n), \      u_n'\in  L^{\frac{\alpha}{\alpha-1}}([0,T];H_n) .$$

\begin{lem}\label{l2.3}

 Under the assumptions of Theorem \ref{T1}, there exists a constant $K>0$ such that
\begin{equation}
 \|u_n\|_X+\sup_{t\in[0,T]}\|u_n\|_H+\|A(\cdot,u_n)\|_{X^*}\le K, \ n\ge 1.
\end{equation}
\end{lem}

\begin{proof} By the integration by parts formula and $(H3)$ we have
\begin{equation}\begin{split}
& ~~~~\|u_n(t)\|_H^2-\|u_n(0)\|_H^2\\
&=2\int_0^t\<u_n'(s),u_n(s)\>_V d s\\
&=2\int_0^t\<P_nA(s,u_n(s))+P_nb(s),u_n(s)\>_V d s\\
&=2\int_0^t\<A(s,u_n(s))+b(s),u_n(s)\>_V d s\\
&\le\int_0^t\left(-\delta\|u_n(s)\|_V^\alpha+C\|u_n(s)\|_H^2+f(s)+\|b(s)\|_{V^*}\|u_n(s)\|_V   \right) d s  \\
&\le \int_0^t\left(-\frac{\delta}{2}\|u_n(s)\|_V^\alpha+C\|u_n(s)\|_H^2+f(s)+C_1\|b(s)\|_{V^*}^{\frac{\alpha}{\alpha-1}}   \right) d s,
\end{split}
\end{equation}
where $C_1$ is a constant induced from Young's inequality.

Hence we have for $t\in [0,T]$,
$$  \|u_n(t)\|_H^2+\frac{\delta}{2}\int_0^t\|u_n(s)\|_V^\alpha d s\le \|u(0)\|_H^2 +
C\int_0^t\|u_n(s)\|_H^2 ds +\int_0^t  \left(f(s)+C_1\|b(s)\|_{V^*}^{\frac{\alpha}{\alpha-1}}  \right) d s.  $$
Then by  Gronwall's lemma we have
$$    \|u_n(t)\|_H^2 \le e^{Ct}\left( \|u(0)\|_H^2
+\int_0^t e^{-Cs} \left(f(s)+C_1\|b(s)\|_{V^*}^{\frac{\alpha}{\alpha-1}}  \right) d s \right), \ t\in[0, T].   $$
$$ \frac{\delta}{2}\int_0^t\|u_n(s)\|_V^\alpha d s\le e^{Ct}\left( \|u(0)\|_H^2 +
\int_0^t e^{-Cs}\left(f(s)+C_1\|b(s)\|_{V^*}^{\frac{\alpha}{\alpha-1}}  \right) d s \right), \ t\in[0, T].  $$
Therefore, there exists a constant $C_2$ such that
$$ \|u_n\|_{X}+\sup_{t\in[0,T]}\|u_n(t)\|_H \le C_2, \ n\ge 1.  $$
Then by $(H4)$ there exists a constant $C_3$ such that
$$ \|A(\cdot, u_n)\|_{X^*} \le C_3, \ n\ge 1.   $$
Hence the proof is complete.
\end{proof}

Note that $X,X^*$ and $H$ are reflexive spaces, by the estimates in Lemma \ref{l2.3}, there exists a subsequence, again denote
by $u_n$, such that as $n\rightarrow\infty$
\begin{equation*}
 \begin{split}
    u_n &\rightharpoonup u\  \ \text{in}\  X \ (\text{also in} \ \   W^1_\alpha(0,T;V,H)); \\
 A(\cdot,u_n)&\rightharpoonup w \ \ \text{in} \ X^*; \\
 u_n(T) &\rightharpoonup z \ \ \text{in}  \  H.
 \end{split}
\end{equation*}

Recall that $u_n(0)=P_nu_0\rightarrow u_0$ in $H$ as $n\rightarrow\infty$.

\begin{lem}
Under the assumptions of Theorem \ref{T1},
 the limit elements $u,w$ and $z$ satisfy  $u\in W^1_\alpha(0,T;V,H)$ and
$$ u'(t)=w(t)+b(t), \ 0<t<T, \ u(0)=u_0, \ u(T)=z.$$
\end{lem}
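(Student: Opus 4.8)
The plan is to identify the weak limits $u,w,z$ by testing the Galerkin equation $(\ref{2.1})$ against time-dependent functions built from the basis, namely $\psi(t)=\phi(t)e_j$ with $\phi\in C^1([0,T])$ and $e_j$ a fixed basis vector, and then letting $n\to\infty$ while keeping $n\ge j$ so that $e_j\in H_n$. For such $j$ the projection $P_n$ is harmless: taking the $H$-inner product of $(\ref{2.1})$ with $e_j$ gives $\<u_n'(t),e_j\>_H=\<A(t,u_n(t))+b(t),e_j\>_V$. Since $t\mapsto\<u_n(t),e_j\>_H$ is absolutely continuous, multiplying by $\phi(t)$ and integrating by parts in the scalar variable $t$ yields
$$ \phi(T)\<u_n(T),e_j\>_H-\phi(0)\<u_n(0),e_j\>_H-\int_0^T\phi'(t)\<u_n(t),e_j\>_H\,dt=\int_0^T\phi(t)\<A(t,u_n(t))+b(t),e_j\>_V\,dt. $$

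First I would pass to the limit in this identity. Each term converges: $\<u_n(T),e_j\>_H\to\<z,e_j\>_H$ and $\<u_n(0),e_j\>_H\to\<u_0,e_j\>_H$ from the weak (respectively strong) convergence of the endpoint data; $\int_0^T\phi'(t)\<u_n(t),e_j\>_H\,dt\to\int_0^T\phi'(t)\<u(t),e_j\>_H\,dt$ because $u_n\rightharpoonup u$ in $X$ and $\phi'e_j\in X^*$; and the right-hand side tends to $\int_0^T\phi(t)\<w(t)+b(t),e_j\>_V\,dt$ because $A(\cdot,u_n)\rightharpoonup w$ in $X^*$ while $\phi e_j\in X$ and the $b$-contribution is independent of $n$. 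This produces a limiting identity valid for every $j$ and every $\phi\in C^1([0,T])$. Choosing $\phi\in C_c^\infty((0,T))$ kills the boundary terms and leaves
$$ -\int_0^T\phi'(t)\<u(t),e_j\>_H\,dt=\int_0^T\phi(t)\<w(t)+b(t),e_j\>_V\,dt, $$
which says exactly that the distributional derivative of $t\mapsto\<u(t),e_j\>_H$ equals $\<w(\cdot)+b(\cdot),e_j\>_V$. Since $\{e_j\}$ is total in $V$, an element of $V^*$ is determined by its pairings with the $e_j$, so this coordinatewise statement upgrades to the $V^*$-valued identity $u'=w+b\in X^*$; hence $u\in W^1_\alpha(0,T;V,H)$, which embeds continuously into $C([0,T];H)$, so that $u(0)$ and $u(T)$ are well-defined elements of $H$.

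Finally, with $u\in W^1_\alpha(0,T;V,H)$ in hand I would apply the integration-by-parts formula recalled above to the pair $u$ and $\psi=\phi e_j$ for general $\phi\in C^1([0,T])$, substitute $u'=w+b$, and compare the result with the limiting identity of the previous paragraph (now kept with its boundary terms). The interior integrals cancel and only the boundary terms survive, leaving $\phi(T)\<z-u(T),e_j\>_H=\phi(0)\<u_0-u(0),e_j\>_H$. Taking $\phi$ with $\phi(0)=0,\ \phi(T)=1$ forces $\<z-u(T),e_j\>_H=0$, and taking $\phi(0)=1,\ \phi(T)=0$ forces $\<u_0-u(0),e_j\>_H=0$; as $\{e_j\}$ is an orthonormal basis of $H$ this gives $u(T)=z$ and $u(0)=u_0$.

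I expect the main obstacle to be bookkeeping rather than depth: one must keep the $H$- and $V$-pairings straight (using $\<u,v\>_V=\<u,v\>_H$ for $u\in H,v\in V$ to move between them), check that each weak or strong convergence is paired against a factor lying in the correct dual space ($\phi'e_j\in X^*$ for the $u_n$-terms, $\phi e_j\in X$ for the $A(\cdot,u_n)$-terms), and justify twice that testing against the total family $\{e_j\}$ suffices — once to identify the $V^*$-valued derivative and once to pin down the $H$-valued traces at $t=0$ and $t=T$. No further compactness or monotonicity input is needed here, since this lemma only concerns the linear identification of the limit; the nonlinear identification $w=A(\cdot,u)$ is deferred to the pseudo-monotonicity step.
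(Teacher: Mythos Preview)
Your proposal is correct and follows essentially the same route as the paper: test the Galerkin equation against $\phi(t)v$ with $v$ ranging over the basis (the paper takes $v\in H_n$ and then uses density of $\bigcup_n H_n$ in $V$, you take $v=e_j$ directly), pass to the limit, use compactly supported $\phi$ to identify $u'=w+b$, and then compare the full integrated identity with the integration-by-parts formula applied to $u$ to pin down the traces $u(0)=u_0$ and $u(T)=z$. The only cosmetic difference is that the paper invokes the abstract $W^1_\alpha$ integration-by-parts formula for $u_n$ at the outset, whereas you work coordinatewise with scalar integration by parts; both are fine here.
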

\begin{proof}
 The proof is standard (cf. \cite[Lemma 30.5]{Z90}), we include it here for the completeness.
Recall the following integration by parts formula
$$\<u(T), v(T)\>_H -\<u(0), v(0)\>_H=\int_0^T \<u'(t), v(t)\>_V dt
+ \int_0^T \<v'(t), u(t)\>_V dt, \ u,v\in W^1_\alpha(0,T; V,H).  $$
Then, for $\psi\in C^\infty([0,T])$ and $v\in H_n$, by (\ref{2.1}) we have
\begin{equation}
 \begin{split}
 & \<u_n(T), \psi(T)v\>_H -\<u_n(0), \psi(0)v\>_H \\
=&\int_0^T \<u_n'(t), \psi(t)v\>_V dt
+ \int_0^T \<\psi'(t)v, u_n(t)\>_V dt\\
=&\int_0^T \<A(t,u_n(t))+b(t), \psi(t)v\>_V dt
+ \int_0^T \<\psi'(t)v, u_n(t)\>_V dt.
 \end{split}
\end{equation}
Letting $n\rightarrow\infty$ we obtain for all $v\in \bigcup_{n} H_n$,
\begin{equation}\label{part}
 \<z, \psi(T)v\>_H -\<u_0, \psi(0)v\>_H
=\int_0^T \<w(t)+b(t), \psi(t)v\>_V dt
+ \int_0^T \<\psi'(t)v, u(t)\>_V dt.
\end{equation}
Since $\bigcup_{n} H_n$ is dense in $V$, it's easy to show that (\ref{part})
hold for all $v\in V, \psi\in C^\infty([0,T])$.

If $\psi(T)=\psi(0)=0$, then we have
$$ \int_0^T \<w(t)+b(t), v\>_V \psi(t) dt
=- \int_0^T \<u(t), v\>_V\psi'(t) dt.   $$
This implies that $u'=w+b, t\in(0,T)$. In particular, we have $u\in W^1_\alpha(0,T;V,H)$.

Then by the integration by parts formula we have
\begin{equation*}\begin{split}
& \<u(T), \psi(T)v\>_H -\<u(0), \psi(0)v\>_H \\
=&\int_0^T \<u'(t), \psi(t)v\>_V dt
+ \int_0^T \<\psi'(t)v, u(t)\>_V dt  \\
  =&\int_0^T \<w(t)+b(t), \psi(t)v\>_V dt
+ \int_0^T \<\psi'(t)v, u(t)\>_V dt.
 \end{split}
\end{equation*}
Hence by (\ref{part}) we obtain
$$   \<u(T), \psi(T)v\>_H -\<u(0), \psi(0)v\>_H =   \<z, \psi(T)v\>_H -\<u_0, \psi(0)v\>_H. $$
Then by choosing  $\psi(T)=1, \psi(0)=0$ and $\psi(T)=0, \psi(0)=1$ respectively  we obtain that
$$   u(T)=z, \  u(0)=u_0.   $$
Hence the proof is complete.
\end{proof}

Next lemma is very crucial for the proof of Theorem \ref{T1}. The result
basically says that $A$ is also a pseudo-monotone operator from $X$ to
$X^*$, hence one can still use a modified monotonicity tricks to verify the limit of the
Galerkin approximation as a solution to (\ref{1.1}).
 The techniques used in the proof is inspired by the works of Hirano and Shioji \cite{H89,S97}.

\begin{lem}\label{L2.5}
Under the assumptions of Theorem \ref{T1},
suppose that
\begin{equation}\label{2.3}
 \liminf_{n\rightarrow \infty} \int_0^T\<A(t,u_n(t)), u_n(t)\>_V d t \ge
\int_0^T \<w(t), u(t)\>_V d t,
\end{equation}
then for any $v\in X$ we have
\begin{equation}
\int_0^T\<A(t,u(t)), u(t)-v(t)\>_V d t \ge
 \limsup_{n\rightarrow \infty}\int_0^T \<A(t,u_n(t)), u_n(t)-v(t)\>_V d t.
\end{equation}

\end{lem}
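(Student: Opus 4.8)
The plan is to reduce the claim to two cleaner statements and then to extract those from the pointwise pseudo-monotonicity of $A(t,\cdot)$. First I would observe that, since $A(\cdot,u_n)\rightharpoonup w$ in $X^*$ and $u\in X$, one has $\int_0^T\langle A(t,u_n),u\rangle_V\,dt\to\int_0^T\langle w,u\rangle_V\,dt$, so hypothesis (\ref{2.3}) is equivalent to
$$\liminf_{n\to\infty}\int_0^T\langle A(t,u_n(t)),u_n(t)-u(t)\rangle_V\,dt\ge 0.$$
Next, writing $\langle A(t,u_n),u_n-v\rangle_V=\langle A(t,u_n),u_n-u\rangle_V+\langle A(t,u_n),u-v\rangle_V$ and using $A(\cdot,u_n)\rightharpoonup w$ once more, the conclusion follows as soon as I establish the two facts $w=A(\cdot,u)$ in $X^*$ and $\lim_{n\to\infty}\int_0^T\langle A(t,u_n),u_n-u\rangle_V\,dt=0$: these give $\limsup_n\int_0^T\langle A(t,u_n),u_n-v\rangle_V\,dt=\int_0^T\langle w,u-v\rangle_V\,dt=\int_0^T\langle A(t,u),u-v\rangle_V\,dt$, which is the desired inequality (in fact with equality). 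One checks, by letting $v$ range over $X$, that these two facts are also necessary, so aiming directly at them loses nothing.

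The engine is the pointwise pseudo-monotonicity. By Remark \ref{R1}(1) the Lions--Aubin theorem makes $W^1_\alpha(0,T;V,H)\subseteq L^\alpha(0,T;H)$ compact, so $u_n\to u$ strongly in $L^\alpha(0,T;H)$ and, after passing to a subsequence, $u_n(t)\to u(t)$ in $H$ for a.e. $t$. For a.e. fixed $t$, Lemma \ref{L2.1} gives that $A(t,\cdot)$ is pseudo-monotone, and $(H4)$ makes it bounded, so Browder's equivalent form is available. Moreover the bound $\|u_n\|_X\le K$ from Lemma \ref{l2.3} and Fatou's lemma yield $\liminf_n\|u_n(t)\|_V<\infty$ for a.e. $t$; hence along a (possibly $t$-dependent) subsequence $u_n(t)$ stays bounded in the reflexive space $V$, and, its only possible weak limit being the $H$-limit $u(t)$, one gets $u_n(t)\rightharpoonup u(t)$ in $V$. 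With $g_n(t):=\langle A(t,u_n(t)),u_n(t)-u(t)\rangle_V$, Proposition \ref{condition P} then forces $\liminf_n g_n(t)\le 0$ along such subsequences, while Browder's form gives $A(t,u_n(t))\rightharpoonup A(t,u(t))$ in $V^*$ together with $g_n(t)\to 0$ at every $t$ where one additionally has $\liminf_n g_n(t)\ge 0$.

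The hard part, and the main obstacle, is to upgrade this pointwise information to the two global statements, which is where I would adopt the measure-theoretic subsequence scheme inspired by Hirano and Shioji. The obstruction is that the weak $V$-convergence $u_n(t)\rightharpoonup u(t)$ holds only along $t$-dependent subsequences, and, crucially, the remainder coming from local monotonicity $(H2)$ cannot be tamed by any growth of $\rho,\eta$: estimate (\ref{c3}) is assumed only for uniqueness, so here $\rho(u_n(t))$ and $\eta(u_n(t))$ are merely locally bounded in $V$. A natural device is to cut $[0,T]$ along the sublevel sets $\{t:\|u_n(t)\|_V\le M\}$, on which the local boundedness of $\rho,\eta$ together with the strong $L^\alpha(0,T;H)$ convergence controls the $(H2)$-remainder, while the complementary high-norm sets have measure at most $K^\alpha/M^\alpha$ uniformly in $n$ by Chebyshev's inequality; absorbing the uncontrolled tail is exactly what the pointwise pseudo-monotonicity, fed in through a measurable selection of the $t$-wise extractions, is meant to accomplish. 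For the interchange of limit and integral I would rely on equi-integrability rather than pointwise domination: the uniform bound $\|A(\cdot,u_n)\|_{X^*}\le K$, Hölder's inequality, and the strong convergence in $L^\alpha(0,T;H)$ make the families $\langle A(\cdot,u_n),u_n-u\rangle_V$ and $\langle A(\cdot,u_n),\phi\rangle_V$ (for $\phi\in X$) uniformly integrable, so that a Vitali-type theorem legitimizes passing to the limit.

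Running this by contradiction on a subsequence, the lower bound $\liminf_n\int_0^T g_n\,dt\ge 0$ combined with the pointwise conclusions $g_n(t)\to 0$ and $A(t,u_n(t))\rightharpoonup A(t,u(t))$ should force both $\lim_n\int_0^T g_n\,dt=0$ and, by testing against arbitrary $\phi\in X$, $w=A(\cdot,u)$. Once these are secured, the reassembly of the first paragraph finishes the proof for every $v\in X$. I expect essentially all the difficulty to be concentrated in this measure-theoretic passage, namely managing the $t$-dependent weak-$V$ extractions and justifying the limit/integral interchange in the complete absence of any growth control on $\rho$ and $\eta$.
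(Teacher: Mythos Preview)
Your reduction is correct but sends you in a harder direction than necessary, and the measure-theoretic engine you propose has a genuine gap. The claim that the family $g_n(t)=\langle A(t,u_n(t)),u_n(t)-u(t)\rangle_V$ is uniformly integrable does not follow from the ingredients you list: this is a $V^*$--$V$ pairing, not an $H$ inner product, so strong convergence in $L^\alpha(0,T;H)$ is irrelevant. H\"older gives $\int_E|g_n|\le K\,\|(u_n-u)\mathbf 1_E\|_X$, and since $u_n-u$ converges only weakly in $X=L^\alpha(0,T;V)$ this is merely bounded, not uniformly small on small $E$. Concretely, $(H3)$ and $(H4)$ show $|g_n(t)|$ is comparable to $\|u_n(t)\|_V^\alpha$, and the family $\{\|u_n(\cdot)\|_V^\alpha\}_n$ is bounded in $L^1$ but in general \emph{not} equi-integrable. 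Your sublevel-set cut does not rescue this: on $\{\|u_n(t)\|_V>M\}$ the measure is $\le K^\alpha/M^\alpha$ but $|g_n|$ there is of order $M^\alpha$, so the tail contribution need not vanish as $M\to\infty$. The appeal to measurable selection of $t$-dependent subsequences is likewise a red herring once Vitali is unavailable.

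What you are missing is that one does not need two-sided control. Combining $(H3)$ and $(H4)$ with the uniform bound $\sup_t\|u_n(t)\|_H\le K$ from Lemma~\ref{l2.3} gives, after Young's inequality, a one-sided bound
\[
g_n(t)\le -\tfrac{\delta}{2}\|u_n(t)\|_V^\alpha + C\bigl(f(t)+1+\|u(t)\|_V^\alpha\bigr)=:h(t),\qquad h\in L^1(0,T).
\]
This is exactly enough for Fatou's lemma on the $\limsup$ side; together with the pointwise estimate $\limsup_n g_n(t)\le 0$ (which follows from Proposition~\ref{condition P} on subsequences bounded in $V$, and from the $-\|u_n(t)\|_V^\alpha$ term on unbounded ones) and hypothesis~(\ref{2.3}), one gets $\lim_n\int_0^T g_n\,dt=0$. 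The paper then extracts the a.e.\ subsequence by the elementary identity $|g_n|=2g_n^+-g_n$: since $g_n^+\le h^+$ and $g_n^+(t)\to 0$ pointwise, dominated convergence gives $\int g_n^+\to 0$, hence $\int|g_n|\to 0$ and $g_{n_i}(t)\to 0$ a.e. along a subsequence. At this point the \emph{inequality} form of pseudo-monotonicity (not Browder's) applied at a.e.\ $t$ gives $\langle A(t,u),u-v\rangle_V\ge\limsup_i\langle A(t,u_{n_i}),u_{n_i}-v\rangle_V$, and a second application of the one-sided Fatou bound (now with $v$ in place of $u$) integrates this to the claimed inequality. No equi-integrability, no cutting, no measurable selection; and there is no need to identify $w=A(\cdot,u)$ inside the lemma---that identification is precisely what the lemma is used to deduce afterward.
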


\begin{proof}

Since $W^1_\alpha(0,T;V,H)\subset C([0,T];H)$ is a continuous embedding, we have that
 $u_n(t)$ converges to $u(t)$ weakly in $H$
for all $t\in[0,T]$. Hence  $u_n(t)$ also converges to $u(t)$ weakly in $V$
for all $t\in[0,T]$.

\textbf{Claim 1:}  For all  $t\in[0,T]$ we have
\begin{equation}\label{claim 1}
  \limsup_{n\rightarrow\infty}\<A(t,u_n(t)),u_n(t)-u(t)\>_V \le 0 .
\end{equation}

Suppose there exists a $t_0$ such that
$$   \limsup_{n\rightarrow\infty}\<A(t_0,u_n(t_0)),u_n(t_0)-u(t_0)\>_V > 0.   $$
Then we can take a subsequence such that
$$   \lim_{i\rightarrow\infty}\<A(t_0,u_{n_i}(t_0)),u_{n_i}(t_0)-u(t_0)\>_V > 0.   $$
Note that $u_{n_i}(t_0)$ converges to $u(t_0)$ weakly in $V$ and $A(t_0,\cdot)$ is pseudo-monotone, we have
$$ \<A(t_0,u(t_0)), u(t_0)-v\>_V \ge \limsup_{i\rightarrow\infty}\<A(t_0,u_{n_i}(t_0)),u_{n_i}(t_0)-v\>_V,
\ v\in V. $$
In particular, we have
 $$ \limsup_{i\rightarrow\infty}\<A(t_0,u_{n_i}(t_0)),u_{n_i}(t_0)-u(t_0)\>_V\le 0, $$
which is a contradiction with the definition of this subsequence.

Hence (\ref{claim 1}) holds.

By $(H3)$ and $(H4)$ there exists a constant $K$ such that
\begin{equation*}
 \begin{split}
   \<A(t,u_n(t)),u_n(t)-v(t)\>_V\le & -\frac{\delta}{2}\|u_n(t)\|_V^\alpha +K\left( f(t)
+\|u_n(t)\|_H^2  \right)\\
& +K\left(1+\|u_n(t)\|_H^{\alpha \beta} \right)\|v(t)\|_V^\alpha,\
v\in X.
 \end{split}
\end{equation*}

Then by Lemma \ref{l2.3}, Fatou's lemma, (\ref{2.3}) and (\ref{claim 1}) we have
\begin{equation}
\begin{split}
0&\le \liminf_{n\rightarrow \infty}\int_0^T \<A(t,u_n(t)), u_n(t)-u(t)\>_V d t \\
&\le \limsup_{n\rightarrow \infty}\int_0^T \<A(t,u_n(t)), u_n(t)-u(t)\>_V d t \\
&\le \int_0^T \limsup_{n\rightarrow \infty} \<A(t,u_n(t)), u_n(t)-u(t)\>_V d t \le 0.
\end{split}
\end{equation}

Hence
$$   \lim_{n\rightarrow \infty}\int_0^T \<A(t,u_n(t)), u_n(t)-u(t)\>_V d t = 0.   $$

\textbf{Claim 2:} There exists a subsequence $\{u_{n_i}\}$ such that
\begin{equation}\label{claim 2}
 \lim_{i\rightarrow \infty} \<A(t,u_{n_i}(t)), u_{n_i}(t)-u(t)\>_V  = 0 \  \ \text{for}\ a.e. \ t\in[0,T].
\end{equation}
Define $ g_n(t)= \<A(t,u_{n}(t)), u_{n}(t)-u(t)\>_V, \ t\in[0,T] $, then
$$     \lim_{n\rightarrow \infty}\int_0^T g_n(t) d t = 0, \ \ \limsup_{n\rightarrow\infty} g_n(t)\le 0,\ \  t\in[0,T].       $$
Then by Lebesgue's dominated convergence theorem we have
$$ \lim_{n\rightarrow \infty}\int_0^T g_n^+(t) d t = 0,  $$
where $g_n^+(t):=max\{g_n(t), 0 \}$.

Note that $|g_n(t)|=2g_n^+(t)-g_n(t)$, hence we have
$$  \lim_{n\rightarrow \infty}\int_0^T |g_n(t)| d t = 0.     $$
Therefore, we can take a subsequence $\{g_{n_i}(t)\}$ such that
$$     \lim_{i\rightarrow \infty} g_{n_i}(t) = 0 \ \ \text{for}\ a.e. \ t\in[0,T],  $$
$i.e.$  (\ref{claim 2}) holds.

Therefore, for any $v\in X$, we can choose a subsequence $\{u_{n_i}\}$ such that
$$   \lim_{i\rightarrow \infty}\int_0^T \<A(t,u_{n_i}(t)), u_{n_i}(t)-v(t)\>_V d t =
\limsup_{n\rightarrow \infty}\int_0^T \<A(t,u_n(t)), u_n(t)-v(t)\>_V d t;         $$
$$    \lim_{i\rightarrow \infty} \<A(t,u_{n_i}(t)), u_{n_i}(t)-u(t)\>_V  = 0 \ \ \text{for}\ a.e. \ t\in[0,T].       $$
Since $A$ is pseudo-monotone, we have
$$  \<A(t,u(t)), u(t)-v(t)\>_V \ge  \limsup_{i\rightarrow \infty} \<A(t,u_{n_i}(t)), u_{n_i}(t)-v(t)\>_V,\  \ t\in[0,T].       $$
By Fatou's lemma we obtain
\begin{equation}
 \begin{split}
  \int_0^T\<A(t,u(t)), u(t)-v(t)\>_V d t &\ge
 \int_0^T\limsup_{i\rightarrow \infty} \<A(t,u_{n_i}(t)), u_{n_i}(t)-v(t)\>_V d t\\
&\ge  \limsup_{i\rightarrow \infty}\int_0^T \<A(t,u_{n_i}(t)), u_{n_i}(t)-v(t)\>_V d t\\
&= \limsup_{n\rightarrow \infty}\int_0^T \<A(t,u_n(t)), u_n(t)-v(t)\>_V d t.
\end{split}
\end{equation}
Hence the proof is complete.
\end{proof}

\noindent\textbf{Proof of Theorem \ref{T1}}
(i) Existence:
The integration by parts formula implies that
$$ \|u_n(T)\|_H^2-\|u_n(0)\|_H^2=2 \int_0^T \<A(t,u_n(t))+b(t), u_n(t)\>_V d t;   $$
$$ \|u(T)\|_H^2-\|u(0)\|_H^2=2 \int_0^T \<w(t)+b(t), u(t)\>_V d t.   $$
Since $u_n(T)\rightharpoonup z$ in $H$, by the lower semicontinuity of $\|\cdot\|_H$ we have
$$  \liminf_{n\rightarrow \infty} \|u_n(T)\|_H^2\ge \|z\|_H^2=\|u(T)\|_H^2.   $$
Hence we have
\begin{equation*}
 \begin{split}
 \liminf_{n\rightarrow \infty} \int_0^T\<A(t,u_n(t)), u_n(t)\>_V d t &\ge \frac{1}{2}
\left( \|u(T)\|_H^2 -\|u(0)\|_H^2 \right)-\int_0^T\<b(t),u(t)\>_V dt\\
&=\int_0^T \<w(t), u(t)\>_V d t.
\end{split}
\end{equation*}

By Lemma \ref{L2.5}  we have for any $v\in X$,
\begin{equation*}
 \begin{split}
  \int_0^T\<A(t,u(t)), u(t)-v(t)\>_V d t
&\ge\limsup_{n\rightarrow \infty}\int_0^T \<A(t,u_n(t)), u_n(t)-v(t)\>_V d t\\
&\ge\liminf_{n\rightarrow \infty}\int_0^T \<A(t,u_n(t)), u_n(t)-v(t)\>_V d t\\
&\ge \int_0^T \<w(t), u(t)\>_V d t- \int_0^T \<w(t), v(t)\>_V d t\\
&=\int_0^T \<w(t), u(t)-v(t)\>_V d t.
\end{split}
\end{equation*}
Since $v\in X$ is arbitrary, we have $A(\cdot,u)=w$ as the element in $X^*$.

Hence $u$ is a solution to (\ref{1.1}).

(ii) Uniqueness:  Suppose $u(\cdot,u_0),v(\cdot,v_0)$ are the
solutions to (\ref{1.1}) with starting points $u_0,v_0$
respectively, then by the integration by parts formula   we have for
$t\in[0, T]$,
\begin{equation*}
 \begin{split}
  \|u(t)-v(t)\|_H^2&=\|u_0-v_0\|_H^2+ 2\int_0^t\< A(s,u(s))-A(s,v(s)),u(s)-v(s)\>_V ds\\
 &\le \|u_0-v_0\|_H^2+ 2\int_0^t \left(C+\rho(u(s))+ \eta(v(s))    \right)   \|u(s)-v(s)\|_H^2 ds.
 \end{split}
\end{equation*}
By (\ref{c3}) we know that
$$       \int_0^T \left(C+\rho(u(s))+ \eta(v(s))  \right)  d s <\infty.                $$
Then by Gronwall's lemma we obtain
\begin{equation}\label{estimate of difference}
    \|u(t)-v(t)\|_H^2\le\|u_0-v_0\|_H^2 \exp\left[2\int_0^t \left(C+\rho(u(s))+ \eta(v(s))    \right) d s  \right], \ t\in[0, T].
\end{equation}
In particular, if $u_0=v_0$, this  implies the  uniqueness of the
solution of $(\ref{1.1})$. \qed



\subsection{Proof of Theorem \ref{T2}} By $(H2)$ we have
\begin{equation*}
 \begin{split}
  \|u_1(t)-u_2(t)\|_H^2&=\|u_{1,0}-u_{2,0}\|_H^2+ 2\int_0^t\< A(s,u_1(s))-A(s,u_2(s)),u_1(s)-u_2(s)\>_V ds\\
                          & \ \ + 2\int_0^t\< b_1(s)-b_2(s),u_1(s)-u_2(s)\>_V ds\\
 &\le \|u_{1,0}-u_{2,0}\|_H^2+ \int_0^t \| b_1(s)-b_2(s)\|_H^2 d s\\
  &\ \   +\int_0^t \left(C+\rho(u_1(s))+ \eta(u_2(s))   \right) \|u_1(s)-u_2(s)\|_H^2 ds, \ t\in[0, T],
 \end{split}
\end{equation*}
where $C$ is a constant.

Then by Gronwall's lemma we have
\begin{equation*}
 \begin{split}
      \|u_1(t)-u_2(t)\|_H^2
\le &  \exp\left[\int_0^t \left(C+\rho(u_1(s))+ \eta(u_2(s))\right) d s  \right]\\
& \cdot  \left( \|u_{1,0}-u_{2,0}\|_H^2 +\int_0^t
\|b_1(s)-b_2(s)\|_H^2 ds \right), \ t\in[0, T].
 \end{split}
\end{equation*}
\qed

\section{Application to examples}
It's obvious that  the main results can be applied to  nonlinear
evolution equations with monotone operators ($e.g.$
  porous medium equation, $p$-Laplace equation) perturbated by some
non-monotone terms ($e.g.$ some locally Lipschitz perturbation).
Moreover, we  also formulate some examples where the coefficients
are only locally monotone. For simplicity here we only formulate the
examples where the coefficients are time independent, but one can
easily adapt all these examples to the time dependent case.

 Here we use the notation $D_i$ to denote the spatial derivative $\frac{\partial}{\partial x_i}$,
$\Lambda \subseteq \mathbb{R}^d$ is an open bounded domain with smooth
boundary.
 For standard Sobolev space $W_0^{1,p}(\Lambda)$ $(p\ge 2)$  we always use the following (equivalent) Sobolev norm:
$$    \|u\|_{1,p}:=\left(\int_\Lambda |\nabla u(x)|^p d x\right)^{\frac{1}{p}}.      $$

As preparation we first give a lemma for verifying $(H2)$.

\begin{lem}\label{L3.1}
Consider the Gelfand triple
$$ V:=W_0^{1,2}(\Lambda)\subseteq H:=L^2(\Lambda) \subseteq  W^{-1,2}(\Lambda)  $$
and the operator
$$ A(u)=\Delta u+ \sum_{i=1}^d f_i(u)D_i u,  $$
where $f_i$ ($i=1,\cdots,d$) are  Lipschitz functions on $\mathbb{R}$.

(1) If $d<3$,  then there exists a constant
$K$ such that
$$2 \<A(u)-A(v), u-v\>_V
   \le - \|u-v\|_V^2+  \left(K +K\|u\|_{L^4}^4 + K\|v\|_V^2  \right)\|u-v\|_H^2,\ u,v\in V.$$
In particular, if $f_i$ are  bounded functions for $i=1,\cdots,d$, then we have
$$2 \<A(u)-A(v), u-v\>_V
   \le - \|u-v\|_V^2+  \left(K  + K\|v\|_V^2  \right)\|u-v\|_H^2,\ u,v\in V.$$

(2) For $d=3$ we have
$$2 \<A(u)-A(v), u-v\>_V
   \le - \|u-v\|_V^2+  \left(K +K\|u\|_{L^4}^8+ K\|v\|_V^4 \right)\|u-v\|_H^2,\ u,v\in V.$$
In particular, if $f_i$ are   bounded functions for $i=1,\cdots,d$,  we have
$$2 \<A(u)-A(v), u-v\>_V
   \le - \|u-v\|_V^2+  \left(K + K\|v\|_V^4 \right)\|u-v\|_H^2,\ u,v\in V.$$

(3) If $f_i$ are bounded measurable functions on $\Lambda$ and independent of $u$ for $i=1,\cdots,d$,
i.e.
$$ A(u)=\Delta u+ \sum_{i=1}^d f_i\cdot D_i u,  $$
then for any $d\ge 1$ we have
$$2 \<A(u)-A(v), u-v\>_V
   \le - \|u-v\|_V^2+  K\|u-v\|_H^2,\ u,v\in V.$$
\end{lem}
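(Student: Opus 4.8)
The plan is to set $w=u-v$ and split
$2\<A(u)-A(v),u-v\>_V$ into its linear (Laplacian) part and its convective part. Since $w\in W_0^{1,2}(\Lambda)$, integration by parts gives $\<\Delta w,w\>_V=-\int_\Lambda|\nabla w|^2\,dx=-\|w\|_V^2$, so the Laplacian contributes exactly $-2\|w\|_V^2$; this is the reservoir I will spend on absorption, keeping $-\|w\|_V^2$ at the end. For the convective part I would use the algebraic identity $f_i(u)D_iu-f_i(v)D_iv=f_i(u)D_iw+\big(f_i(u)-f_i(v)\big)D_iv$, which is the crucial bookkeeping step: it places the dependence on $u$ in the first summand and the dependence on $v$ (through $D_iv$) in the second.

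For the first summand I would use that a Lipschitz $f_i$ has at most linear growth, $|f_i(u)|\le C(1+|u|)$, and bound $\int_\Lambda|f_i(u)|\,|D_iw|\,|w|\,dx$ by Hölder's inequality into a piece $C\|w\|_V\|w\|_H$ (from the constant part) and a piece $C\|u\|_{L^4}\|w\|_V\|w\|_{L^4}$ (from the linear part, Hölder with exponents $4,2,4$). For the second summand, the Lipschitz bound $|f_i(u)-f_i(v)|\le L|w|$ gives $\int_\Lambda|f_i(u)-f_i(v)|\,|D_iv|\,|w|\,dx\le L\|v\|_V\|w\|_{L^4}^2$.

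The only place the dimension enters is the control of $\|w\|_{L^4}$, for which I would invoke the Ladyzhenskaya/Gagliardo--Nirenberg interpolation: for $d\le2$, $\|w\|_{L^4}^2\le C\|w\|_H\|w\|_V$ (for $d=1$ this follows from $\|w\|_{L^4}^2\le C\|w\|_H^{3/2}\|w\|_V^{1/2}$ together with the Poincar\'e bound $\|w\|_H\le C\|w\|_V$), and for $d=3$, $\|w\|_{L^4}^2\le C\|w\|_H^{1/2}\|w\|_V^{3/2}$. Substituting these and then applying Young's inequality to each fractional power of $\|w\|_V$ — converting it into $\varepsilon\|w\|_V^2$ plus a factor in $\|w\|_H^2$ — is exactly what produces the stated coefficients: for $d\le2$ the term $\|u\|_{L^4}\|w\|_V\|w\|_{L^4}$ yields $\varepsilon\|w\|_V^2+C\|u\|_{L^4}^4\|w\|_H^2$ and the term $\|v\|_V\|w\|_{L^4}^2$ yields $\varepsilon\|w\|_V^2+C\|v\|_V^2\|w\|_H^2$, while for $d=3$ the same bookkeeping with the weaker interpolation raises the Young exponents and gives $\|u\|_{L^4}^8$ and $\|v\|_V^4$. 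Choosing $\varepsilon$ small enough that the total coefficient of $\|w\|_V^2$ extracted from the convective part does not exceed $1$, and combining with $-2\|w\|_V^2$ from the Laplacian, leaves $-\|w\|_V^2$, which proves (1) and (2). When the $f_i$ are bounded, the first summand is estimated by $M\int_\Lambda|\nabla w|\,|w|\,dx\le\varepsilon\|w\|_V^2+C\|w\|_H^2$, with no dependence on $u$, deleting the $\|u\|_{L^4}$-term. For (3) the operator is linear, so $A(u)-A(v)=\Delta w+\sum_i f_iD_iw$ and no decomposition is needed; since the $f_i$ are merely bounded measurable (so integration by parts is unavailable), I would estimate $\sum_i\int_\Lambda f_i(D_iw)w\,dx$ directly by $M\int_\Lambda|\nabla w|\,|w|\,dx\le\tfrac12\|w\|_V^2+C\|w\|_H^2$, which uses no Sobolev embedding and hence holds for every $d\ge1$.

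The main obstacle is the convective term: everything hinges on choosing the decomposition so that $u$ and $v$ sit in separate factors, and on matching the dimension-dependent interpolation exponent against the Young split so that precisely $\|u\|_{L^4}^4$ (resp.\ $\|u\|_{L^4}^8$) and $\|v\|_V^2$ (resp.\ $\|v\|_V^4$) emerge while the dissipation $-\|w\|_V^2$ survives. Once the interpolation inequalities are fixed, the remaining steps are routine applications of H\"older's and Young's inequalities.
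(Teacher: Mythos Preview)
Your proposal is correct and follows essentially the same route as the paper: the same splitting $f_i(u)D_iu-f_i(v)D_iv=f_i(u)D_iw+(f_i(u)-f_i(v))D_iv$, the same H\"older estimates using linear growth and Lipschitz continuity of the $f_i$, the same Ladyzhenskaya interpolation $\|w\|_{L^4}^4\le C\|w\|_H^{2}\|w\|_V^{2}$ (resp.\ $\|w\|_{L^4}^4\le C\|w\|_H\|w\|_V^{3}$ in $d=3$), and the same Young absorption into the dissipative $-\|w\|_V^2$. Your treatment of parts~(2) and~(3) is in fact more explicit than the paper's, which dispatches them in one line each.
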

\begin{proof} $(1)$
 Since all $f_i$ are  Lipschitz and  linear growth, we have
\begin{align*}
& ~~~~ \<A(u)-A(v), u-v\>_V \\
 &= - \|u-v\|_V^2+ \sum_{i=1}^d \int_\Lambda \left(f_i(u)D_i u-f_i(v)D_i v\right)\left(u-v\right) d
 x\\
&= - \|u-v\|_V^2+ \sum_{i=1}^d \int_\Lambda \left(f_i(u)(D_i u-D_i
v) +D_iv( f_i(u) -f_i(v))\right)\left(u-v\right) dx\\
&\le - \|u-v\|_V^2+ \sum_{i=1}^d \bigg[ \left( \int_\Lambda (D_i
u-D_i v)^2 d x\right)^{1/2} \left( \int_\Lambda f_i^2(u)(u-v)^2 dx
\right)^{1/2}\\
&~~ +\left( \int_\Lambda (D_i v)^2 d x\right)^{1/2} \left(
\int_\Lambda \left(f_i(u)-f_i(v)\right)^2
 (u-v)^2 dx \right)^{1/2} \bigg] \\
 &\le - \|u-v\|_V^2+ K \|u-v\|_V  \left(\int_\Lambda
 \left(1+ u^4\right) dx \right)^{1/4} \left(\int_\Lambda
 (u-v)^4 dx \right)^{1/4}    + K \| v\|_V \left(\int_\Lambda
 (u-v)^4 dx \right)^{1/2} \\
&\le - \|u-v\|_V^2+ K \|u-v\|_V^{3/2}  \|u-v\|_H^{1/2}\left(1+\|u\|_{L^4} \right) + 2K \| v\|_V \|u-v\|_V
\|u-v\|_H \\
&\le - \frac{1}{2}\|u-v\|_V^2+  \left(K +K\|v\|_V^2+K\|u\|_{L^4}^4
\right)\|u-v\|_H^2, \ u,v\in V,
\end{align*}
where $K$ is a constant that may change from line to line, and  we also used
the following well known estimate on $\mathbb{R}^2$ (see \cite[Lemma
2.1]{MS02})
\begin{equation}\label{e3}
  \|u\|_{L^4}^4 \le 2   \|u\|_{L^2}^2  \|\nabla u\|_{L^2}^2, \ u\in W_0^{1,2}(\Lambda) .
\end{equation}

$(2)$ For $d=3$ we use the following estimate (cf. \cite{MS02})
\begin{equation}\label{e4}
  \|u\|_{L^4}^4 \le 4  \|u\|_{L^2}  \|\nabla u\|_{L^2}^3, \  u\in W_0^{1,2}(\Lambda),
\end{equation}
then the second assertion can be derived similarly by using Young's
inequality.

$(3)$ This assertion  obviously follows from the estimates in $(i)$.
\end{proof}

\begin{rem}\label{R2} (1)
If all $f_i$ are bounded, then the local monotonicity $(H2)$ also implies the coercivity $(H3)$.

(2) If we write the operator in the following  vector form
$$   A(u)=\Delta u+  \nabla \cdot \vec{F}(u),   $$
where $\vec{F}(x)=(F_1(x),\cdots,F_d(x)): \mathbb{R}\rightarrow \mathbb{R}^d$ satisfies
\begin{equation*}
 |\vec{F}(x)-\vec{F}(y)|\le C(1+|x|+|y|)|x-y|, \ x,y\in \mathbb{R}.
\end{equation*}
Then by using the divergence theorem (or Stokes' theorem) one can
show that for $d<3$,
$$2 \<A(u)-A(v), u-v\>_V
   \le - \|u-v\|_V^2+  \left(K +K\|u\|_{L^4}^4 + K\|v\|_{L^4}^4 \right)\|u-v\|_H^2,\ u,v\in V,$$
 for $d=3$ we have
$$2 \<A(u)-A(v), u-v\>_V
   \le - \|u-v\|_V^2+  \left(K +K\|u\|_{L^4}^8 + K\|v\|_{L^4}^8 \right)\|u-v\|_H^2,\ u,v\in V.$$
And it's also easy to show the coercivity $(H3)$ holds since we have
$$      \<\nabla \cdot \vec{F}(u), u\>_V=-\int_\Lambda \vec{F}(u)\cdot \nabla u dx=0,
     \ u\in  W_0^{1,2}(\Lambda).  $$
\end{rem}


The first example is a general semilinear equation on $\Lambda \subseteq \mathbb{R}$,
which unifies the classical reaction-diffusion equation and Burgers equation.

\begin{exa}\label{E0}
 Consider  the following equation
\begin{equation}\label{Burges1}
 u'= \frac{\partial^2 u}{\partial x^2} + \frac{\partial F}{\partial x}(u)+g(u)+h, \ u(0)=u_0\in L^2(\Lambda).
\end{equation}
Suppose the following conditions hold for some constant $C>0$:

(i)  $F$ is a function on $\mathbb{R}$ satisfies
$$ \    |F(x)-F(y)| \le C(1+|x|+|y|)|x-y|, \ x,y\in \mathbb{R}.  $$

(ii) $g$ is a continuous function  on $\mathbb{R}$ such that
\begin{equation}\label{c0}
\begin{split}
 g(x)x  &\le C(x^2+1), \  x\in \mathbb{R};\\
|g(x)| & \le C(|x|^3+1), \  x\in \mathbb{R};\\
 (g(x)-g(y))(x-y)&\le C(1+|x|^t+|y|^t)(x-y)^2, \  x,y\in \mathbb{R},
\end{split}
     \end{equation}
where $t\ge 1$ is a constant.

(iii) $h\in W^{-1,2}(\Lambda)$.

Then  $(\ref{Burges1})$ has a  solution
$u\in W^1_2(0,T; W_0^{1,2}(\Lambda), L^2(\Lambda))$.
Moreover, if  $t\le 2$, then the solution of $(\ref{Burges1})$  is also unique.
\end{exa}

\begin{proof}
We define the Gelfand triple
$$ V:=W_0^{1,2}(\Lambda)\subseteq H:=L^2(\Lambda) \subseteq  W^{-1,2}(\Lambda)  $$
and the operator
$$A(u)=\frac{\partial^2 u}{\partial x^2} + \frac{\partial F}{\partial x}(u)+g(u), \ u\in V.  $$
It is easy to show that $(H1)$ holds by  the continuity of $F$ and $g$.

Similar to Lemma \ref{L3.1}, one can easily show that
\begin{equation}
 \begin{split}
  & \<\frac{\partial F}{\partial x}(u)- \frac{\partial F}{\partial x}(v), u-v\>_V \\
=& -\int_\Lambda  \( F(u)-F(v)   \)
\left(\frac{\partial u}{\partial x}-\frac{\partial v}{\partial x} \right) dx   \\
\le & \frac{1}{4}\|u-v\|_V^2+C\left( 1+\|u\|_{L^4}^4+\|v\|_{L^4}^4
\right
) \|u-v\|_H^2, \ u,v\in V.
 \end{split}
\end{equation}
By integration by parts formula we have
$$   \<\frac{\partial F}{\partial x}(u), u\>_V=0, \ u\in V.   $$

 (\ref{c0}) and (\ref{e3}) implies that
\begin{equation}
 \begin{split}
 & \<g(u)-g(v),u-v\>_V \\
\le & C\left(1+\|u\|_{L^{2t}}^t+ \|v\|_{L^{2t}}^t \right)\|u-v\|_{L^4}^2\\
\le & \frac{1}{4}\|u-v\|_V^2+C\left(
1+\|u\|_{L^{2t}}^{2t}+\|v\|_{L^{2t}}^{2t} \right) \|u-v\|_H^2, \
u,v\in V.
 \end{split}
\end{equation}
Therefore, we have
$$2 \<A(u)-A(v), u-v\>_V
   \le - \|u-v\|_V^2+ C \left(1  + \|u\|_{L^4}^4 +\|u\|_{L^{2t}}^{2t} +
\|v\|_{L^4}^4 +\|v\|_{L^{2t}}^{2t} \right)\|u-v\|_H^2,\ u,v\in V,$$
$i.e.$ $(H2)$ holds.

Note that by (\ref{c0}) we have
$$ \<g(u),u\>_V \le C\left(1+\|u\|_{H}^2 \right), \ u\in V,$$
hence $(H3)$ holds with $\alpha=2$.

By the Sobolev embedding theorem we have
$$  \|g(u)\|_{V^*}\le C\left(1+\|u\|_{L^3}^3\right)\le C\left(1+\|u\|_V\|u\|_{H}^{2}\right), \ u\in V, $$
$$ \|\frac{\partial F}{\partial x}(u)\|_{V^*}\le  \|F(u)\|_{H} \le
C\left(1+\|u\|_{L^4}^2\right)\le C\left(1+\|u\|_V\|u\|_{H}\right), \
u\in V.  $$ Hence $(H4)$ also holds (with $\beta=2$).

Therefore, the assertions follow from Theorem \ref{T1}.
\end{proof}

\begin{rem}
 If we take $F(x)=x^2$ and $g=h=0$, then (\ref{Burges1}) is the classical Burgers equation
in fluid mechanics.
 If we take $g(x)=x-x^3$ and $F=h=0$, then (\ref{Burges1}) is the well known reaction-diffusion equation.
\end{rem}

\begin{exa}\label{E1}
 Consider  the following equation
\begin{equation}\label{Burges}
 u'=\Delta u+ \sum_{i=1}^d f_i(u)D_i u+g(u)+h, \ u(0)=u_0\in L^2(\Lambda).
\end{equation}
Suppose the following conditions hold for some constant $C>0$:

(i)  $f_i$ are bounded  Lipschitz functions on $\mathbb{R}$ for $i=1,\cdots, d$;

(ii) $g$ is a continuous function  on $\mathbb{R}$ such that
\begin{equation}\label{c1}
\begin{split}
 g(x)x  &\le C(x^2+1), \  x\in \mathbb{R};\\
|g(x)| & \le C(|x|^r+1), \  x\in \mathbb{R};\\
 (g(x)-g(y))(x-y)&\le C(1+|x|^t+|y|^t)(x-y)^2, \  x,y\in \mathbb{R},
\end{split}
     \end{equation}
where  $r,t\ge 1$ are some constants.

(iii) $h\in W^{-1,2}(\Lambda)$.

Then we have

(1) if $d=2$, $r=\frac{7}{3}$ and $t=2$,
  $(\ref{Burges})$ has a  unique solution
$u\in W^1_2(0,T; W_0^{1,2}(\Lambda), L^2(\Lambda))$.

(2) if $d=3$, $r=\frac{7}{3}$ and $t\le 3$,   $(\ref{Burges})$ has a  solution
$u\in W^1_2(0,T; W_0^{1,2}(\Lambda), L^2(\Lambda))$.
Moreover, if  $t=\frac{4}{3}$, $f_i, i=1,2,3$ are bounded measurable functions on $\Lambda$ and independent of $u$,
  then the solution of $(\ref{Burges})$
is also unique.
\end{exa}

\begin{proof} We define the Gelfand triple
$$ V:=W_0^{1,2}(\Lambda)\subseteq H:=L^2(\Lambda) \subseteq  W^{-1,2}(\Lambda)  $$
and the operator
$$ A(u)=\Delta u+ \sum_{i=1}^d f_i(u)D_i u+g(u), \ u\in V.  $$
By assumption (\ref{c1}) we have
$$  \<g(u)-g(v),u-v\>_V
\le  C\left(1+\|u\|_{L^{2t}}^t+ \|v\|_{L^{2t}}^t \right)\|u-v\|_{L^4}^2\\.$$
Then from (\ref{e3}) or (\ref{e4}) and  Lemma \ref{L3.1} we have for $d=2$
$$2 \<A(u)-A(v), u-v\>_V
   \le - \|u-v\|_V^2+  K\left(1  +\|v\|_V^2 + \|u\|_{L^{2t}}^{2t}+ \|v\|_{L^{2t}}^{2t}  \right)\|u-v\|_H^2,\ u,v\in V,$$
and for $d=3$,
$$2 \<A(u)-A(v), u-v\>_V
   \le - \|u-v\|_V^2+
K\left(1  +\|v\|_V^4 + \|u\|_{L^{2t}}^{4t}+ \|v\|_{L^{2t}}^{4t}  \right)\|u-v\|_H^2,\ u,v\in V,$$
i.e. $(H2)$ holds.

Note that
$$ \<g(u),u\>_V \le C\left(1+\|u\|_{H}^2 \right), \ u\in V.$$
Then by Lemma \ref{L3.1} and Remark \ref{R2}  we know that $(H3)$ holds with  $\alpha=2$.


For $d=2,3$ we have
 $$  \|g(u)\|_{V^*}\le C\left(1+\|u\|_{L^{6r/5}}^{r}\right), \ u\in V. $$

For  $r= \frac{7}{3}$, by the interpolation theorem we have
$$ \|u\|_{L^{6r/5}}\le \|u\|_{L^2}^{4/7} \|u\|_{L^6}^{3/7}, \ u\in W_0^{1,2}(\Lambda) \subseteq L^6(\Lambda).  $$
Then
$$  \|g(u)\|_{V^*}\le C\left(1+\|u\|_{L^{6r/5}}^{r}\right)\le C\left(1+
\|u\|_H^{4/3} \|u\|_{V} \right), \ u\in V.  $$
Hence $(H4)$ holds.

The hemicontinuity $(H1)$ follows easily from the continuity of $f$ and $g$.

Therefore, all assertions follow from Theorem \ref{T1}.

In particular, if $d=3$ and $f_i, i=1,2,3$ are bounded measurable functions on $\Lambda$ and independent of $u$,
 then we have
$$2 \<A(u)-A(v), u-v\>_V
   \le - \|u-v\|_V^2+
K\left(1  + \|u\|_{L^{2t}}^{4t}+ \|v\|_{L^{2t}}^{4t}
\right)\|u-v\|_H^2,\ u,v\in V.$$ Since $t=\frac{4}{3}$, by the
interpolation inequality we have
$$   \|u\|_{L^{2t}} \le   \|u\|_{L^2}^{5/8}   \|u\|_{L^{6}}^{3/8}, \ u\in V.  $$
Therefore
$$   \|u\|_{L^{2t}}^{4t} \le  C \|u\|_{H}^{10/3}   \|u\|_{V}^{2}, \ u\in V.  $$
Hence the solution of of $(\ref{Burges})$
is  unique.
\end{proof}

\begin{rem}
(1) As we mentioned in Remark 1.1, the classical result for monotone
operators can not be applied to the above example. The typical
example of monotone perturbation is to assume all $f_i$ are
independent of unknown solution $u$, $g$ is monotone ($e.g.$
Lipschitz) and has linear growth ($r=1$). However, here we allow $g$
is locally monotone ($e.g.$ locally Lipschitz) and has certain
polynomial growth ($r>1$).

(2) The boundedness of $f_i$ is only assumed in order to verify the coercivity $(H3)$.
This assumption can be removed if we formulate (\ref{Burges}) in vector form
as explained in Remark \ref{R2}.
\end{rem}

We may also consider the following quasi-linear evolution equations on
$\mathbb{R}^d\ (d\ge 3)$.

\begin{exa} Consider  the Gelfand triple
$$ V:=W_0^{1,p}(\Lambda)\subseteq H:=L^2(\Lambda) \subseteq  W^{-1,q}(\Lambda) $$
and the following equation on $\mathbb{R}^d$ for $p> 2$
\begin{equation}\label{p-Laplace}
 u'=\sum_{i=1}^d D_i\left(|D_iu|^{p-2} D_i u  \right) +g(u)+h, \ u(0)=u_0\in L^2(\Lambda).
\end{equation}
Suppose the following conditions hold:

(i) $g$ is a continuous function  on $\mathbb{R}$ such that
\begin{equation}
\begin{split}\label{c2}
 g(x)x  &\le C(|x|^{\frac{p}{2}+1}+1), \  x\in \mathbb{R};\\
|g(x)| & \le C(|x|^{r}+1), \  x\in \mathbb{R};\\
(g(x)-g(y))(x-y)&\le C(1+|x|^t+|y|^t)|x-y|^{s}, \  x,y\in \mathbb{R},
\end{split}
\end{equation}
where  $C>0$ and $r,s,t\ge 1$ are some constants.

(ii) $h\in W^{-1,q}(\Lambda)$, $p^{-1}+q^{-1}=1$.

Then we have

(1) if $d<p$, $s=2$ and $r= p+1$,  $(\ref{p-Laplace})$ has a solution.
Moreover, if $t\le p$ also holds, then the solution is unique.

(2) if $d>p$, $2<s<p$, $r=\frac{2p}{d}+p-1$ and $t\le \frac{p^2(s-2)}{(d-p)(p-2)}$,
 $(\ref{p-Laplace})$ has a solution.
The solution is  unique if $t\le \frac{p(p-s)}{p-2}$ also holds.
\end{exa}

\begin{proof}
(1) It's well known that $\sum_{i=1}^d D_i\left(|D_iu|^{p-2} D_i u\right) $
satisfy $(H1)$-$(H4)$ (cf. \cite{L08,L08b}). In particular,
 there exists a constant $\delta>0$ such that
\begin{equation}\label{e7}
 \sum_{i=1}^d\<  D_i\left(|D_iu|^{p-2} D_i u\right)-  D_i\left(|D_iv|^{p-2} D_i v\right)  , u-v  \>_V
\le - \delta \|u-v\|_V^p, \ u,v\in  W_0^{1,p}(\Lambda).
\end{equation}

Recall that  for $d<p$ we have the following Sobolev embedding
$$      W_0^{1,p}(\Lambda) \subseteq   L^{\infty}(\Lambda).   $$
Hence we have
\begin{equation}
\begin{split}
  \<g(u)-g(v),u-v\>_V & \le C \int_\Lambda \left(1+|u|^t+ |v|^t \right) |u-v|^2 d x\\
  &\le C \left( 1+ \|u\|_{L^{\infty}}^t +\|v\|_{L^{\infty}}^t   \right) \|u-v\|^2_{L^{2}}\\
  &\le C \left( 1+ \|u\|_{V}^t +\|v\|_{V}^t   \right) \|u-v\|_{H}^{2}, \ u,v\in V,
\end{split}
\end{equation}
where $C$ is a constant may change from line to line.

Hence $(H2)$ holds.

Note that from (\ref{c2}) we have
\begin{equation}
\begin{split}
  \<g(u), u\>_V & \le C \int_\Lambda (1+ |u|^{\frac{p}{2}+1})dx\\
       & \le C\left(1+\|u\|_{L^\infty}^{p/2}\|u\|_H\right)\\
   & \le \frac{\delta}{2} \|u\|_V^p + C\left(1+\|u\|_H^2\right), \ u\in V.
\end{split}
\end{equation}
Therefore, $(H3)$ holds with $\alpha=p$ by (\ref{e7}).

$(H4)$ follows from the following estimate:
$$  \|g(u)\|_{V^*} \le C\left(1+\|u\|_{L^{p+1}}^{p+1}\right)\le C\left(1+\|u\|_{L^{\infty}}^{p-1}\|u\|_{H}^2\right)
, \ u\in V. $$

Hence the assertions follow from Theorem \ref{T1}.

(2)
Note that  for $d>p$ we have the following Sobolev embedding
$$      W_0^{1,p}(\Lambda) \subseteq   L^{p_0}(\Lambda), \ p_0=\frac{dp}{d-p}.   $$
Let $t_0=\frac{p(s-2)}{s(p-2)}\in(0,1)$ and $p_1\in(2,p_0)$ such that
$$     \frac{1}{p_1}=\frac{1-t_0}{2}+ \frac{t_0}{p_0}.    $$
Then we have the following interpolation inequality
$$   \|u\|_{L^{p_1}}\le   \|u\|_{L^{2}}^{1-t_0}   \|u\|_{L^{p_0}}^{t_0}, \ u\in W_0^{1,p}(\Lambda).   $$
Since $2<s<p$, it is easy to show that $s<p_1$.

Let $p_2=\frac{p_1}{p_1-s}$, then by assumption (\ref{c2}) we have
\begin{equation}\label{e8}
\begin{split}
  \<g(u)-g(v),u-v\>_V & \le C \int_\Lambda \left(1+|u|^t+ |v|^t \right) |u-v|^s d x\\
  &\le C \left( 1+ \|u\|_{L^{tp_2}}^t +\|v\|_{L^{tp_2}}^t   \right) \|u-v\|^s_{L^{p_1}}\\
  &\le C \left( 1+ \|u\|_{L^{tp_2}}^t +\|v\|_{L^{tp_2}}^t   \right) \|u-v\|_{L^{2}}^{s(1-t_0)}   \|u-v\|_{L^{p_0}}^{st_0}\\
&\le \varepsilon \|u-v\|_{L^{p_0}}^{p} +
C_\varepsilon \left( 1+ \|u\|_{L^{tp_2}}^{tb} +\|v\|_{L^{tp_2}}^{tb}   \right)   \|u-v\|_{L^{2}}^{2}
,
\end{split}
\end{equation}
where $\varepsilon, C_\varepsilon$ are some constants and the last step follows from the following Young inequality
$$  xy\le \varepsilon x^a +C_\varepsilon y^b, \ x,y\in\mathbb{R},\  a=\frac{p-2}{s-2},\ b=\frac{p-2}{p-s}. $$
By calculation we have
$$  \frac{s}{p_1}=\frac{p-s}{p-2}+\frac{p(s-2)}{p_0(p-2)},\ p_2=\frac{p_0(p-2)}{(p_0-p)(s-2)}. $$
Hence if $t\le \frac{(p_0-p)(s-2)}{p-2}$, then
$$ \|u\|_{L^{tp_2}}\le C \|u\|_{L^{p_0}} \le C  \|u\|_{V}, \ v\in V.   $$
Therefore, $(H2)$ follows from (\ref{e7}) and (\ref{e8}).

$(H3)$ can be verified for $\alpha=p$ in a similar way.

For $r=\frac{2p}{d}+p-1$, by the interpolation inequality we have
$$\|g(u)\|_{V^*}\le C\left(1+ \|u\|_{L^{rp_0^\prime}}^r \right)
\le C\left( 1+\|u\|_{p_0}^{p-1}\|u\|_H^{\beta} \right), \ u\in V, $$
where
$$ \frac{1}{p_0}+\frac{1}{p_0^\prime},\ \ \beta=\frac{2p}{d}.  $$
Therefore, $(H4)$ also holds.

Then all assertions  follow from Theorem \ref{T1}.
\end{proof}

\begin{rem}
One further generalization is to replace  $\sum_{i=1}^d
D_i\left(|D_iu|^{p-2} D_i u\right)$ by more general quasi-linear
differential operator
$$ \sum_{|\alpha|\le m} (-1)^{|\alpha|}D_\alpha A_\alpha(x,Du(x,t);t),  $$
where $Du=(D_\beta u)_{|\beta|\le m}$. Under certain assumptions (cf. \cite[Proposition 30.10]{Z90}) this operator
satisfies the monotonicity and coercivity condition.

According to Theorem \ref{T1}, we can  obtain the existence and
uniqueness of solutions to this type of quasi-linear PDE with some
non-monotone perturbations ($e.g.$ some locally Lipschitz lower
order terms).
\end{rem}

Now we apply Theorem \ref{T1} to the Navier-Stokes equation.

Let $\Lambda$ be a bounded domain in $\mathbb{R}^2$ with smooth
boundary. It's well known that by means of divergence free Hilbert
spaces $V,H$ and the Helmhotz-Leray orthogonal projection $P_H$, the
classical form of the Navier-Stokes equation can be formulated in
the following form:
\begin{equation}\label{NSE}
u'=Au+B(u)+f,\ u(0)=u_0\in H,
\end{equation}
where
$$ V=\left\{ v\in W_0^{1,2}(\Lambda,\mathbb{R}^2): \nabla \cdot v=0 \ a.e.\ \text{in} \ \Lambda   \right\}, \
\|v\|_V:=\left(\int_\Lambda |\nabla v|^2 dx  \right)^{1/2},
$$
and $H$ is the closure of $V$ in the following norm
$$ \|v\|_H:=\left(\int_\Lambda | v|^2 dx  \right)^{1/2}.$$
The linear operator $P_H$ (Helmhotz-Leray projection) and $A$
(Stokes operator with viscosity constant $\nu$) are defined by
$$ P_H: L^2(\Lambda, \mathbb{R}^2)\rightarrow H,\  \text{ orthogonal projection}; $$
$$  A:=W^{2,2}(\Lambda, \mathbb{R}^2)\cap V\rightarrow H, \ Au=\nu P_H \Delta u,      $$
and the nonlinear operator
$$ B:  \mathcal{D}_B\subset H\times V\rightarrow H, \ B(u,v)=- P_H\left[(u \cdot \nabla) v\right], B(u)=B(u,u).  $$

It's well known that by using the Gelfand triple
$$     V\subseteq H\equiv H^*\subseteq V^*   $$
the following mappings
$$ A: V\rightarrow V^*, \  B: V\times V\rightarrow V^*  $$
are well defined. In particular, we have
$$ \<B(u,v),w\>_V=-\<B(u,w),v\>_V, \   \<B(u,v),v\>_V=0,\  u,v,w\in V. $$

\begin{exa}(2D Navier-Stokes equation)
For $f\in L^2(0,T;V^*)$ and $u_0\in H$,
 $(\ref{NSE})$ has a unique solution.
\end{exa}
\begin{proof} The hemicontinuity $(H1)$ is easy to show since $B$ is a bilinear map.

Note that $ \<B(v),v\>_V=0$, it's also easy to get the coercivity $(H3)$
$$ \<Av+B(v)+f,v\>_V\le -\nu\|v\|_V^2+\|f\|_{V^*}\|v\|_V \le
  -\frac{\nu}{2}\|v\|_V^2+C\|f\|_{V^*}^2, \ v\in V.   $$
Recall the following estimate (cf. \cite[Lemma 2.1, 2.2]{MS02})
\begin{equation}\label{e2}
 \begin{split}
  |\<B(w),v\>_V| &\le 2 \|w\|_{L^4(\Lambda;\mathbb{R}^2)}\|v\|_V; \\
  |\<B(w),v\>_V| &\le 2  \|w\|_V^{3/2} \|w\|_H^{1/2}  \|v\|_{L^4(\Lambda;\mathbb{R}^2)}, v,w\in V. \\
 \end{split}
\end{equation}
Then we have
\begin{equation}
 \begin{split}
  \<B(u)-B(v),u-v\>_V &=-  \<B(u,u-v),v\>_V+  \<B(v,u-v),v\>_V \\
 &= -  \<B(u-v),v\>_V \\
 &\le 2  \|u-v\|_V^{3/2} \|u-v\|_H^{1/2}  \|v\|_{L^4(\Lambda;\mathbb{R}^2)} \\
& \le \frac{\nu}{2} \|u-v\|_V^{2} + \frac{32}{\nu^3}  \|v\|_{L^4(\Lambda;\mathbb{R}^2)}^4 \|u-v\|_H^{2},
\ u,v\in V.
 \end{split}
\end{equation}
Hence we have the local monotonicity $(H2)$
$$  \<Au+B(u)-Av-B(v),u-v\>_V
\le -\frac{\nu}{2} \|u-v\|_V^{2} + \frac{32}{\nu^3}  \|v\|_{L^4(\Lambda;\mathbb{R}^2)}^4 \|u-v\|_H^{2}.  $$

The growth $(H4)$ follows from (\ref{e2}) and (\ref{e3}).

Hence the existence of solution to (\ref{NSE}) follows from Theorem \ref{T1}.

By definition any solution $u$ of  (\ref{NSE}) is a element in $L^2([0,T];V)$ and $C([0,T];H)$, then (\ref{e3})
implies that
$$  \int_0^T \|u(t)\|_{L^4}^4 d t \le 2\sup_{t\in[0,T]}\|u(t)\|_H^2 \int_0^T \|u(t)\|_{V}^2 d t < \infty.$$
Hence the solution of (\ref{NSE}) is also  unique.
\end{proof}

\begin{rem} (1)
The main result can be also applied to some other classes of two
dimensional hydrodynamical models such as magneto-hydrodynamic
equations, the Boussinesq model for the B\'{e}nard convection and 2D
magnetic B\'{e}nard problem. We refer to \cite{CM10} (and the
references therein) for the details of these models. Note that the
assumption $(C1)$ in \cite{CM10} implies a special type of local
monotonicity (e.g. see (2.8) in \cite{CM10}).

(2)
 For the 3D Navier-Stokes equation, we recall the following
estimate (cf. \cite[(2.5)]{MS02})
$$   \|\psi\|_{L^4}^4\le 4 \|\psi\|_{L^2} \|\nabla \psi\|_{L^2}^3, \ \psi\in W_0^{1,2}(\Lambda; \mathbb{R}^3).   $$
Then  one can show  that
\begin{equation}
 \begin{split}
  \<B(u)-B(v),u-v\>_V &= -  \<B(u-v),v\>_V \\
 &\le 2  \|u-v\|_V^{7/4} \|u-v\|_H^{1/4}  \|v\|_{L^4(\Lambda;\mathbb{R}^3)} \\
& \le \frac{\nu}{2} \|u-v\|_V^{2} + \frac{2^{12}}{\nu^7}  \|v\|_{L^4(\Lambda;\mathbb{R}^3)}^8 \|u-v\|_H^{2},
\ u,v\in V.
 \end{split}
\end{equation}
Hence we have the following local monotonicity $(H2)$
$$  \<Au+B(u)-Av-B(v),u-v\>_V
 \le -\frac{\nu}{2} \|u-v\|_V^{2} + \frac{2^{12}}{\nu^7}  \|v\|_{L^4(\Lambda;\mathbb{R}^3)}^8 \|u-v\|_H^{2}.  $$
However, we only have  the following growth condition  in the 3D
case:
$$ \|B(u)\|_{V^*}\le 2 \|u\|_{L^4(\Lambda; \mathbb{R}^3)}^2\le 4\|u\|_H^{1/2}\|u\|_V^{3/2}, \ u\in V.$$
Unfortunately, this is not enough to verify  $(H4)$ in Theorem
\ref{T1}.

\end{rem}

Now we apply the main result to the 3D Leray-$\alpha$ model of
turbulence, which is a regularization of the 3D Navier-Stokes
equation. It was first considered by Leray \cite{L34} in order to
prove the existence of a solution to the Navier-Stokes equation in
$\mathbb{R}^3$. Here we use a special smoothing kernel in the 3D
Leray-$\alpha$ model, which was first considered in \cite{CH05} (cf.
\cite{CTV07} for more references). It has been shown in \cite{CH05}
that the 3D Leray-$\alpha$ model compares successfully with
empirical data from turbulent channel and pipe flows for a wide
range of Reynolds numbers. This model has a great potential to
become a good sub-grid-scale large-eddy simulation model of
turbulence. The Leray-$\alpha$ model can be formulated as follows:
\begin{equation}\begin{split}\label{3D}
& u^\prime=\nu \Delta u-(v\cdot \nabla)u-\nabla p+f, \\
&  \nabla\cdot u=0,\ u=v-\alpha^2 \Delta v
\end{split}
\end{equation}
 where $\nu>0$ is the viscosity, $u$ is the velocity, $p$ is the pressure and $f$ is a given
 body-forcing term.

By using the same divergence free Hilbert spaces $V, H$ (but in 3D)
one can rewrite the Leray-$\alpha$ model into the following abstract
form:
\begin{equation}\label{Leray}
u^\prime=Au+B(u,u)+f, \ u(0)=u_0\in H,
\end{equation}
where
$$   Au=\nu P_H\Delta u, B(u,v)=-P_H\left[\left( \left(I-\alpha^2\Delta\right)^{-1}u \cdot \nabla   \right)v  \right].  $$

\begin{exa}(3D Leray-$\alpha$ model)
For $f\in L^2(0,T;V^*)$ and $u_0\in H$,
 $(\ref{Leray})$ has a unique solution.
\end{exa}
\begin{proof} $(H1)$ holds obviously since $B$ is a bilinear map.

Note that $ \<B(u,v),v\>_V=0$, it's also easy to get the coercivity
$(H3)$:
$$ \<Av+B(v,v)+f,v\>_V\le -\nu\|v\|_V^2+\|f\|_{V^*}\|v\|_V \le
  -\frac{\nu}{2}\|v\|_V^2+C\|f\|_{V^*}^2, \ v\in V.   $$
Recall the following well-known estimate (cf. \cite[Lemma 2.1,
2.2]{MS02})
\begin{equation}\label{e5}
 \begin{split}
  & |\<B(u,v),w\>_V|\\
   \le& c \|(I-\alpha^2\Delta)^{-1}u\|_{H}^{1/4} \|(I-\alpha^2\Delta)^{-1}u\|_{V}^{3/4}
   \|v\|_{H}^{1/4} \|v \|_{V}^{3/4} \|w\|_V \\
   \le &  C   \|u\|_H \|v\|_{H}^{1/4} \|v \|_{V}^{3/4} \|w\|_V, \  u, v,w\in
   V,
 \end{split}
\end{equation}
where $c, C$ are some constants.

Then we have
\begin{equation}
 \begin{split}
 & \<B(u,u)-B(v,v),u-v\>_V \\
 =&-  \<B(u,u-v),v\>_V+  \<B(v,u-v),v\>_V \\
 =& -  \<B(u-v, u-v),v\>_V \\
 \le& C  \|u-v\|_H^{5/4} \|u-v\|_V^{3/4}  \|v\|_{V} \\
 \le& \frac{\nu}{2} \|u-v\|_V^{2} + C_\nu \|v\|_{V}^{8/5}
\|u-v\|_H^{2}, \ u,v\in V.
 \end{split}
\end{equation}
Hence we have the local monotonicity $(H2)$:
$$  \<Au+B(u,u)-Av-B(v,v),u-v\>_V
\le -\frac{\nu}{2} \|u-v\|_V^{2} + C_\nu \|v\|_{V}^{8/5}
\|u-v\|_H^{2}.
$$

Note that (\ref{e5}) also implies that $(H4)$ holds.

Hence the existence and uniqueness of solutions to (\ref{Leray})
follows from Theorem \ref{T1}.
\end{proof}

\begin{rem}
The main result can also be applied to some other equations such as
  3D Tamed Navier-Stokes equation, which is also a modified version of 3D Navier-Stokes
  equation. One may refer to \cite{LR10,RZ09} for
  more details.
\end{rem}

\section*{Acknowledgements}
 The author would like to thank Professors Phillipe Cl\'{e}ment, Michael R\"{o}ckner and  Feng-yu Wang
 for their valuable discussions.



\begin{thebibliography}{10}



\bibitem{BDR08}
V.~Barbu, G.~Da~Prato, and M.~R\"{o}ckner, \emph{{E}xistence and uniqueness of nonnegative solutions to the
  stochastic porous media equation}, Indiana Univ. Math. J. \textbf{57} (2008),
  no.~1, 187--212.

\bibitem{BDR2}
V.~Barbu, G.~Da~Prato, and M.~R\"{o}ckner, \emph{{S}ome results on stochastic porous media equations}, Bollettino
  U.M.I. \textbf{9} (2008), 1--15.

\bibitem{BDR1}
V.~Barbu, G.~Da~Prato, and M.~R\"{o}ckner, \emph{{S}tochastic porous media equation and self-organized
  criticality}, Commun. Math. Phys. \textbf{285} (2009), 901--923.

\bibitem{BGLR}
W.~Beyn, B.~Gess, P.~Lescot and M.~R\"{o}ckner, \emph{{T}he global
random attractor for a class of stochastic porous media equations},
Preprint.


\bibitem{Br68}
H. Br\'{e}zis, \emph{\'{E}quations et in\'{e}quations non lin\'{e}aires dans les espaces vectoriels
en dualit\'{e}}, Ann. Inst. Fourier  \textbf{18} (1968), 115--175.

\bibitem{Br73}
H. Br\'{e}zis, \emph{Op\'{e}rateurs maximaux monotones}, North-Holland, Amsterdam, 1973.

\bibitem{Bro63}
F. E. Browder, \emph{Nonlinear elliptic boundary value problems}, Bull. Amer. Math. Soc.
  \textbf{69} (1963), 862--874.

\bibitem{Bro64}
F. E. Browder, \emph{Non-linear equations of evolution}, Ann. Math.  \textbf{80} (1964), 485--523.

\bibitem{Bro72}
F. E. Browder and P. Hess, \emph{Nonlinear mappings of monotone type in Banach spaces},
J. Funct. Anal.   \textbf{11} (1972), 251--294.


\bibitem{CTV07}
V.V. Chepyzhov,  E.S. Titi and M.I. Vishik, \emph{On the convergence
of solutions of the Leray-$\alpha$ model to the trajectory attractor
of the 3D Navier-Stokes system}, Discrete and Continuous Dynamical
Systems, \textbf{17} (2007), 481--500.

\bibitem{CH05}
A. Cheskidov, D.D. Holm, E. Olson and E.S. Titi, \emph{On
Leray-$\alpha$ model of turbulence}, Royal Society London,
Proc. Series A, Mathematical, Physical \& Engineering Sciences,
\textbf{461} (2005), 629--649.

\bibitem{CM10}
I. Chueshov and A. Millet, \emph{Stochastic 2D hydrodynamical type
systems: well posedness and large deviations}, Appl. Math. Optim.
\textbf{61} (2010), 379--420.


\bibitem{Bro77}
F. E. Browder, \emph{Pseudo-monotone operators and nonlinear elliptic boundary value problems on
unbounded domains}, Proc. Natl. Acad. Sci. USA  \textbf{74} (1977), 2659--2661.


\bibitem{GLR}
 B.~Gess, W. Liu, and M.~R\"{o}ckner,
 \emph{Random attractors for a class of stochastic partial differential equations
  driven by general additive noise}, arXiv:1010.4641v1.




\bibitem{G}
I.~Gy{}{\"o}ngy, \emph{{O}n stochastic equations with respect to semimartingale
  {III}}, Stochastics \textbf{7} (1982), 231--254.


\bibitem{GM09}
I.~Gy{}{\"o}ngy and A.~Millet, \emph{Rate of convergence of
space time approximations for stochastic evolution equations},
Pot. Anal. \textbf{30} (2009),  29--64.

\bibitem{HS66}
P. Hartman and G. Stampacchia, \emph{On some nonlinear elliptic differential equations},
Acta. Math. \textbf{115} (1966), 271-310.

\bibitem{H89}
N. Hirano, \emph{Nonlinear evolution equations with nonmonotonic perturbations},
Nonlinear Anal. \textbf{13} (1989), 599--609.




\bibitem{KR79}
N.V. Krylov and B.L. Rozovskii, \emph{{S}tochastic evolution equations},
  Translated from Itogi Naukii Tekhniki, Seriya Sovremennye Problemy Matematiki
  \textbf{14} (1979), 71--146.

\bibitem{K99}
N.V. Krylov,  \emph{On Kolmogorov's equations for finite dimensional
diffusion},
  Stochastic PDE's and Kolmogorov's equations in infinite dimensions (Cetraro, 1998),
Lecture notes in Math.,  vol. 1715, Springer, Berlin, 1999, 1--63.

\bibitem{L34}
J. Leray, \emph{Essai sur le mouvemenet d'un fluide visqueux
emplissant l'espace},  Acta. Math. \textbf{63} (1934), 193--248.

\bibitem{LL65}
J. Leray and J. L. Lions, \emph{Quelques r\'{e}sultats de Vi\v{s}ik sur les
probl\`{e}mes elliptiques non lin\'{e}aires par les m\'{e}thodes de Minty-Browder},
Bull. Soc. Math. France \textbf{93} (1965), 97--107.

\bibitem{Li69}
J. L. Lions, \emph{Quelques m\'{e}thodes de r\'{e}solution des probl\`{e}mes aux
limites on lin\'{e}aires}, Dunod, Paris, 1969.




\bibitem{L08}
W.~Liu, \emph{{H}arnack inequality and applications for stochastic evolution
  equations with monotone drifts}, J. Evol. Equat. \textbf{9} (2009), 747--770.


 \bibitem{LW08}
 W.~Liu and F.-Y. Wang, \emph{{H}arnack inequality and strong {F}eller property
  for stochastic fast diffusion equations}, J. Math. Anal. Appl. \textbf{342}
 (2008), 651--662.


\bibitem{L08b}
W.~Liu, \emph{{L}arge deviations for stochastic evolution equations with small
  multiplicative noise}, Appl. Math. Optim. \textbf{61} (2010), 27--56.

\bibitem{L10} W. Liu:
    \emph{Invariance of subspaces under the solution flow of SPDE,}
 Infin. Dimens. Anal. Quantum Probab. Relat. Top. \textbf{13}
 (2010), 87--98.


\bibitem{LR10}
Wei Liu. and Michael R{\"o}ckner.
\newblock SPDE in Hilbert space with locally monotone coefficients.
\newblock {\em  J. Funct. Anal.} \textbf{259} (2010), 2902--2922.

\bibitem{MS02}
J.-L. Menaldi and S.S. Sritharan  \emph{Stochastic 2-D Navier-Stokes
equation}, Appl. Math. Optim. \textbf{46} (2002), 31--53.


\bibitem{Mi62}
G.J. Minty, \emph{Monotone (non-linear) operators in Hilbert space}, Duke. Math. J.  \textbf{29} (1962), 341--346.

\bibitem{Mi63}
G.J. Minty, \emph{On a monotonicity method for the solution of non-linear equations
in Banach space},  Proc. Nat. Acad. Sci. USA  \textbf{50} (1963), 1038--1041.

\bibitem{Par75}
E.~Pardoux, \emph{{E}quations aux d\'eriv\'ees partielles stochastiques non
  lin\'eaires monotones}, Ph.D. thesis, Universit\'e Paris XI, 1975.

\bibitem{PR07}
C.~Pr\'{e}v\^{o}t and M.~R\"{o}ckner, \emph{A concise course on stochastic
  partial differential equations}, Lecture Notes in Mathematics, vol. 1905,
  Springer, 2007.

\bibitem{RRW}
J.~Ren, M.~R\"ockner, and F.-Y. Wang, \emph{{S}tochastic generalized porous
  media and fast diffusion equations}, J. Diff. Equa. \textbf{238} (2007),
  118--152.

\bibitem{RZ}
J.~Ren,  and X. Zhang, \emph{{F}redlin-Wentzell's large deviation principle for
stochastic evolution equations}, J. Funct. Anal. \textbf{254} (2008),
  3148--3172.

\bibitem{RZ09}
M.~R\"{o}ckner,  and X. Zhang, \emph{Tamed  3D Navier-Stokes
equation: existence, uniqueness and regularity}, Infin. Dimens.
Anal. Quantum Probab. Relat. Top. \textbf{12} (2009),
  525--549.


\bibitem{S97}
N. Shioji,  \emph{Existence of periodic solutions for nonlinear evolution equations
with pseudo monotone  operators}, Proc. Amer. Math. Soc. \textbf{125} (1997), 2921--2929.


\bibitem{Sh97}
R.E. Showalter, \emph{Monotone operators in Banach space and nonlinear partial differential equations},
Mathematical Surveys and Monographs 49, American Mathematical Society, Providence, 1997.



\bibitem{W07}
F.-Y. Wang,  \emph{{H}arnack inequality and applications for stochastic generalized
  porous media equations}, Ann. Probab. \textbf{35} (2007), 1333--1350.



\bibitem{Z90}
E. Zeidler, \emph{Nonlinear Functional Analysis and its Applications II/B: Nonlinear
Monotone operators}, Springer-Verlag, New York, 1990.





\end{thebibliography}
\end{document}